\documentclass[reqno]{amsart}

\usepackage{amsmath,amssymb,amsthm,mathrsfs}


\pagestyle{plain}
\theoremstyle{plain}
\numberwithin{equation}{section}


\newcommand{\BC}{{\mathbb C}}

\newcommand{\BZ}{{\mathbb Z}}
\newcommand{\cB}{{\mathcal B}}

\newcommand{\cF}{{\mathcal F}}
\newcommand{\cG}{{\mathcal G}}\newcommand{\cH}{{\mathcal H}}

\newcommand{\cK}{{\mathcal K}}\newcommand{\cL}{{\mathcal L}}

\newcommand{\cS}{{\mathcal S}}\newcommand{\cT}{{\mathcal T}}
\newcommand{\cV}{{\mathcal V}}
\newcommand{\cW}{{\mathcal W}}\newcommand{\cX}{{\mathcal X}}
\newcommand{\cY}{{\mathcal Y}}\newcommand{\cZ}{{\mathcal Z}}



\newcommand{\fK}{{\mathfrak K}}


\newcommand{\wtilB}{\widetilde{B}}

\newcommand{\wtilS}{\widetilde{S}}

\newcommand{\wtilX}{\widetilde{X}}



\newcommand{\la}{\lambda}



\newcommand{\ran}{\textup{Ran\,}}

\newcommand{\im}{\textup{Im\,}}

\newcommand{\kr}{\textup{Ker\,}}
\newcommand{\cokr}{\textup{Coker\,}}

\newcommand{\ind}{\textup{Index}}

\newcommand{\mat}[2]{\ensuremath{\left[\begin{array}{#1}#2\end{array} \right]}}

\newcommand{\sbm}[1]{\left[\begin{smallmatrix} #1\end{smallmatrix}\right]}

\newcommand{\wtil}[1]{{\widetilde{#1}}}
\newcommand{\what}[1]{{\widehat{#1}}}

\newcommand{\ands}{\quad\mbox{and}\quad}

\theoremstyle{plain}
\newtheorem{theorem}{Theorem}[section]
\newtheorem{corollary}[theorem]{Corollary}
\newtheorem{lemma}[theorem]{Lemma}
\newtheorem{proposition}[theorem]{Proposition}
\theoremstyle{definition}
\newtheorem{definition}[theorem]{Definition}
\newtheorem{example}[theorem]{Example}
\newtheorem{remark}[theorem]{Remark}

\newtheorem{problem}[theorem]{Problem}



\begin{document}

\title{Equivalence after extension and Schur coupling for relatively regular operators}

\author[S. ter Horst]{S. ter Horst}
\address{S. ter Horst, School of Mathematical and Statistical Sciences,
North-West University,
Research Focus Area: Pure and Applied Analytics,
Private Bag X6001,
Potchefstroom 2520,
South Africa and DSI-NRF Centre of Excellence in Mathematical and Statistical Sciences (CoE-MaSS)}
\email{Sanne.TerHorst@nwu.ac.za}

\author[M. Messerschmidt]{M. Messerschmidt}
\address{M. Messerschmidt, Department of Mathematics and Applied Mathematics; University of Pretoria; Private bag X20 Hatfield; 0028 Pretoria; South Africa}
\email{miek.messerschmidt@up.ac.za}

\author[A.C.M. Ran]{A.C.M. Ran}
\address{A.C.M. Ran, Department of Mathematics, Faculty of Sciences, Vrije Universiteit Amsterdam, De Boelelaan 1111, 1081 HV Amsterdam, The Netherlands and Research Focus Area: Pure and Applied Analytics, North-West University, Potchefstroom, South Africa}
\email{a.c.m.ran@vu.nl}

\thanks{This work is based on the research supported in part by the National Research Foundation of South Africa (Grant Numbers 118513 and 127364).}

\subjclass[2010]{Primary 47A62; Secondary 47A53, 47A08}

\keywords{Equivalence after extension; Schur coupling; relatively regular operators; generalized invertible operators; Fredholm operators}

\begin{abstract}
It was recently shown in \cite{tHMRR19} that the Banach space operator relations Equivalence After Extension (EAE) and Schur Coupling (SC) do not coincide by characterizing these relations for operators acting on essentially incomparable Banach spaces. The examples that prove the non-coincidence are Fredholm operators, which is a subclass of relatively regular operators, the latter being operators with complementable kernels and ranges. In this paper we analyse the relations EAE and SC for the class of relatively regular operators, leading to an equivalent Banach space operator problem from which we derive new cases where EAE and SC coincide and provide a new example for which EAE and SC do not coincide and where the Banach space are not essentially incomparable.
\end{abstract}

\maketitle

\section{Introduction}\label{S:Intro}

Equivalence After Extension (EAE) and Schur Coupling (SC) are two relations on (bounded linear) Banach space operators that originated in the study of integral operators \cite{BGK84}, along with the relation Matricial Coupling (MC), and which have since found many other applications, cf., \cite{CP16,CP17,CDS14,ET17,S17,GKR17,tHMRRW18} for a few recent references. The applications of these  operator relations often rely on the fact that EAE, MC and SC coincide, in the specific context of the application, and it is important that one can easily and explicitly move between the three operator relations. This led to the question, posed by H. Bart and V.\'{E}. Tsekanovskii in \cite{BT94}, whether these three operator relations might coincide at the level of general Banach space operators. In fact, by then it was known that EAE and MC coincide \cite{BGK84,BT92a} and in \cite{BT94}, see also \cite{BT92b}, it was proved that EAE and MC are implied by SC. All these implications are obtained by explicit constructions, see Section 2 in \cite{tHR13} for an overview. Various attempts to prove that EAE (or MC) implies SC followed and several positive results in special cases were obtained \cite{BGKR05,tHR13,T14,tHMRRW18}. However, in the recent paper \cite{tHMRR19} an explicit counterexample showing that EAE need not imply SC was obtained from the characterization of EAE and SC on Banach spaces that are essentially incomparable. This reiterated the observation from \cite{tHMR15} that the Banach space geometries of the underlying spaces play an important role. The counterexample of \cite{tHMRR19} involves Fredholm operators, which motivated us into a further investigation of EAE and SC for this class of operators, and more generally for the class of relatively regular operators, without the essential incomparability assumption.

In order to describe the content of this paper in some more detail, we introduce some notation and terminology. Throughout, $U\in\cB(\cX)$ and $V\in\cB(\cY)$ will be two Banach space operators. Here, for Banach spaces $\cV$ and $\cW$, with $\cB(\cV,\cW)$ we indicate the Banach space of bounded linear operators mapping $\cV$ into $\cW$, abbreviated to $\cB(\cV)$ in case $\cV=\cW$. In the sequel, the term operator will always mean bounded linear operator, and invertibility of an operator will mean the operator has a bounded inverse.

We say that the operators $U$ and $V$ are {\em equivalent after extension (EAE)} if there exist Banach spaces $\cX_0$ and $\cY_0$ and invertible operators $E\in\cB(\cY\oplus\cY_0, \cX\oplus\cX_0)$ and $F\in\cB(\cX\oplus\cX_0,\cY\oplus\cY_0)$ so that
\begin{equation}\label{EAE}
\mat{cc}{U&0\\0&I_{\cX_0}}=E\mat{cc}{V&0\\0& I_{\cY_0}}F.
\end{equation}
In other words, $U\oplus I_{\cX_0}$ and $V\oplus I_{\cY_0}$ are equivalent. The operators $U$ and $V$ are called {\em Schur coupled (SC)} if there exists an operator matrix $M=\sbm{A&B\\C&D}\in\cB(\cX\oplus\cY,\cX\oplus\cY)$ so that $A$ and $D$ are invertible and
\[
U=A-BD^{-1}C \ands V=D-CA^{-1}B.
\]
Thus, the Schur complements of $M$ with respect to $D$ and $A$ are the operators $U$ and $V$, respectively.

In this paper we investigate the notions of EAE and SC, and in particular the question whether EAE of $U$ and $V$ implies SC of $U$ and $V$, for the class of {\em relatively regular operators}, that is, operators which have complemented kernels and ranges. In particular, their ranges must be closed. The terminology relatively regular goes back to Atkinson \cite{A53}, but this class of operators has also figured under the monikers generalized Fredholm operators \cite{C74} and generalized invertible operators. More generally, assume that $\kr U$ and $\kr V$ are complementable and so are the closures of $\ran U$ and $\ran V$. Equivalently, $\cX$ and $\cY$ admit decompositions $\cX_1\oplus\cX_2=\cX=\cX_1'\oplus \cX_2'$ and $\cY_1\oplus\cY_2=\cY=\cY_1'\oplus \cY_2'$ such that with respect to these decompositions $U$ and $V$ take the form
\begin{align}
U=\mat{cc}{U'&0\\ 0&0}:\mat{c}{\cX_1\\ \cX_2}\to \mat{c}{\cX_1'\\ \cX_2'} &\ands
V=\mat{cc}{V'&0\\ 0&0}:\mat{c}{\cY_1\\ \cY_2}\to \mat{c}{\cY_1'\\ \cY_2'},\notag\\
\mbox{with}\quad\cX_2=\kr U,\quad \cX_1'=\overline{\ran U},&\quad
\cY_2=\kr V,\quad \cY_1'=\overline{\ran V}.\label{UVdec}
\end{align}
Operators $U$ and $V$ that admit a decomposition as above will be called {\em complementable}. Thus  $U$ and $V$ are relatively regular if they are complementable and have closed range. A {\em Fredholm operator} is a relatively regular operator with finite dimensional kernel and cokernel, while a {\em left (respectively, right) Atkinson} operator is a relatively regular operator with finite dimensional kernel (respectively, cokernel). Here, with some abuse of terminology we refer to a complement of the range of a relatively regular operator $T$ as its {\em cokernel}, denoted as $\cokr T$. Note that all left (respectively, right) Atkinson operators are left (respectively, right) Fredholm operators, but that the converse need not hold, since left and right Fredholm operators need not be relatively regular; see \cite[Chapter 7]{A04} for more details.

In Section \ref{S:EAE-SR}, for given complementable operators $U$ and $V$ that are EAE we characterise the invertible operators $E$ and $F$, of a special form identified in \cite{tHR13}, cf., \eqref{EFform1} below, that establish the EAE of $U$ and $V$. Specifying to the case of relatively regular operators, we recover the result from \cite{BT92a} that $U$ and $V$ are EAE if and only if the kernels of $U$ and $V$ are isomorphic and the cokernels of $U$ and $V$ are isomorphic. However, more importantly, it enables us to identify a Banach space operator problem which is in some way equivalent to the question whether the EAE operators $U$ and $V$ are also SC. We view this observation as the main contribution of our paper. The Banach space operator problem is presented in Section \ref{S:EquivBSOP}, as Problem \ref{P:BSOP}, and in that section we also explain the connection with the question whether given relatively regular operators $U$ and $V$ that are EAE are also SC.

Some analysis of Problem \ref{P:BSOP} is conducted in Section \ref{S:AnalyseBSOP}, and the implications of the obtained results are translated into the context of EAE and SC in Section \ref{S:EAEvsSC}. Again, Banach space geometry plays an important role. In particular, in Section \ref{S:BSP} we further analyse a Banach space property (which we already encountered in \cite{tHMRR19}) that plays an important role in the analysis of Section \ref{S:AnalyseBSOP}, and which turns out to be equivalent to the existence of Fredholm operators of a certain index. Although this analysis in Section \ref{S:AnalyseBSOP} does not provide a complete answer to Problem \ref{P:BSOP}, it shows some of the intricacies that occur when considering the question whether EAE implies SC beyond the known cases, e.g., Hilbert spaces or essentially incomparable Banach spaces, even when restricting to relatively regular operators.

We conclude this introduction with some comments on notation and terminology and recall some facts from Banach space (operator) theory. Throughout this paragraph let $\cG$ and $\cH$ be Banach spaces. An operator $T\in\cB(\cG,\cH)$ is called {\em inessential} whenever $I_\cG-ST$ is Fredholm for any $S\in\cB(\cH,\cG)$, or, equivalently, $I_\cH-TS$ is Fredholm for any $S\in\cB(\cH,\cG)$; in fact, in this case the operators $I_\cG-ST$ and $I_\cH-TS$ are Fredholm with index 0. The class of inessential operators contains all compact, strictly singular and strictly co-singular operators and is closed under left and right multiplication, that is, if $T\in\cB(\cG,\cH)$ is inessential, then so is $STR$ for all $S\in\cB(\cH,\cL)$ and $R\in\cB(\cK,\cG)$, with $\cL$ and $\cK$ arbitrary Banach spaces. See Sections 6.1 and 6.2 of \cite{A04} for further details.
The Banach spaces $\cG$ and $\cH$ are called {\em essentially incomparable} in case all operators in $\cB(\cG,\cH)$ are inessential, or, equivalently, all operators in $\cB(\cH,\cG)$ are inessential. In case $\cG$ and $\cH$ are essentially incomparable, they are also {\em projection incomparable} which means that there is no infinite dimensional complemented subspace of $\cG$ that is isomorphic to a complemented subspace of $\cH$; see Section 7.5 in \cite{A04} for further discussion and many examples.

\section{Equivalence after extension for relatively regular operators}\label{S:EAE-SR}

Recall from \cite{tHR13}, see also \cite[Lemma 2.1]{tHMRRW18}, that if $U\in\cB(\cX)$ and $V\in\cB(\cY)$ are EAE, then it is always possible to select the operators $E$ and $F$ in \eqref{EAE} with $\cX_0=\cY$, $\cY_0=\cX$ and so that $E$ and $F$ have the form
\begin{equation}\label{EFform1}
\begin{aligned}
F=\mat{cc}{F_{11}& I_\cY\\ I+F_{22}F_{11} & F_{22}},&\quad
E=\mat{cc}{E_{11} & U\\ E_{21} & -F_{11}}\\
F^{-1}=\mat{cc}{-F_{22}& I_\cX\\ I+F_{11}F_{22} & -F_{11}},&\quad
E^{-1}=\mat{cc}{\what{E}_{11} & V\\ \what{E}_{21} & F_{22}}.
\end{aligned}
\end{equation}
In Theorem \ref{T:EAEcomplement} below, for complementable operators $U$ and $V$ that are EAE we completely characterize the invertible operators $E$ and $F$ of the form \eqref{EFform1} that establish the EAE of $U$ and $V$, and in Theorem \ref{T:EAErelativelyreg} below we specialize this result to the case where $U$ and $V$ are relatively regular.

By Proposition 1 in \cite{BT92a}, a necessary condition for complementable operators $U$ and $V$ as in \eqref{UVdec} to be EAE is that $\cX_2$ and $\cY_2$ are isomorphic and $\cX_2'$ and $\cY_2'$ are isomorphic, that is, there exist operators
\begin{equation}\label{E'F'}
E'\in\cB(\cX_2,\cY_2)
\ands F'\in\cB(\cX_2',\cY_2'),\quad\mbox{both invertible}.
\end{equation}
While for complementable operators this condition need not be sufficient, by Example 6 in \cite{BT92a}, it is sufficient if one restricts to relatively regular operators \cite[Theorem 2]{BT92a}. We recover this result in Theorem \ref{T:EAErelativelyreg} below.

Assuming we have invertible operators $E'$ and $F'$ as in \eqref{E'F'}, we consider operators $E$ and $F$ as in \eqref{EFform1} that are of the following form
\begin{equation}\label{EFform2}
\begin{aligned}
F&=\mat{cc|cc}{
Y_1 & 0 & I_{\cY_1} & 0 \\ Y_2 & E' & 0 & I_{\cY_2} \\\hline
I+Y_3Y_1 & 0 & Y_3 & 0 \\ Y_4Y_1-E'^{-1}Y_2 & 0 & Y_4 & -E'^{-1}}
: \mat{c}{\cX_1\\\cX_2\\
\hline \cY_1\\\cY_2}\to \mat{c}{\cY_1\\\cY_2\\\hline \cX_1\\\cX_2},\\
E&=\mat{cc|cc}{ X_1 & X_2 & U' & 0 \\ 0 & F'^{-1} & 0 & 0 \\\hline
X_5 
 &X_3 & -Y_1 & 0 \\
X_6 
& X_4 & -Y_2 & -E' }
: \mat{c}{\cY_1'\\\cY_2'\\ \hline \cX_1\\\cX_2}\to \mat{c}{\cX_1'\\\cX_2'\\ \hline \cY_1\\\cY_2}.
\end{aligned}
\end{equation}
One easily verifies that an operator $F$ of this form is invertible, with inverse given by
\begin{equation}\label{Finvform}
F^{-1}=\mat{cc|cc}{
-Y_3 & 0 & I_{\cX_1} & 0 \\ -Y_4  & E'^{-1} & 0 & I_{\cX_2} \\ \hline
I_{\cY_1}+Y_1Y_3  & 0 & -Y_1 & 0 \\ Y_2Y_3+E'Y_4 & 0 & -Y_2 & -E'}
: \mat{c}{\cY_1\\\cY_2\\ \hline\cX_1\\\cX_2} \to \mat{c}{\cX_1\\\cX_2\\ \hline\cY_1\\\cY_2},
\end{equation}
while the invertibility of $F'$ and $E'$ imply that $E$ is invertible if and only if the block operator
\begin{equation}\label{Eblock}
\mat{cc}{X_1 & U'\\ X_5 & -Y_1}
\end{equation}
is invertible.
The motivation of considering $E$ and $F$ in this special form becomes clear from the following result.

\begin{theorem}\label{T:EAEcomplement}
Let $U\in\cB(\cX)$ and $V\in\cB(\cY)$ be complementable Banach space operators as in \eqref{UVdec}. Assume $U$ and $V$ are EAE with the operators $E$ and $F$ that establish the EAE of the form \eqref{EFform1}. Then $E$ and $F$ are of the form \eqref{EFform2} with $E'$ and $F'$ invertible operators as in \eqref{E'F'} and such that
\begin{equation}\label{EAEcondition1}
\begin{aligned}
&\mat{cc}{X_1 & U'\\ X_5 & -Y_1}\mat{cc}{V' & 0 \\ 0 & I_{\cX_1}}= \mat{cc}{U' & 0 \\ 0 & I_{\cY_1}}\mat{cc}{-Y_3 & I_{\cX_1} \\ I_{\cY_1} +Y_1 Y_3 & - Y_1}\\[2mm]
&\qquad \qquad \mbox{and}\quad Y_4=F'^{-1}(X_6V' -Y_2Y_3).
\end{aligned}
\end{equation}

Conversely, if there exist invertible operators $E'$ and $F'$ as in \eqref{E'F'} and operators $X_1, X_5, Y_1, Y_3$ such that the block operator \eqref{Eblock} is invertible and satisfies the first identity in \eqref{EAEcondition1}, then $U$ and $V$ are EAE and the EAE of $U$ and $V$ is established by $E$ and $F$ given by \eqref{EFform2}, where the operators $X_2,X_3,X_4, X_6,Y_1,Y_2,Y_3$ can be chosen arbitrarily and $Y_4$ is given by the second identity in \eqref{EAEcondition1}.
\end{theorem}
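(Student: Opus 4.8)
The plan is to convert the single operator equation expressing the EAE of $U$ and $V$ into block identities adapted to the four decompositions in \eqref{UVdec}, and then to read off the asserted structure from the kernel and range data of $U$ and $V$. Writing $\wtil{U}=\sbm{U&0\\0&I_\cY}$ and $\wtil{V}=\sbm{V&0\\0&I_\cX}$, the relation to exploit is $\wtil{U}=E\wtil{V}F$ with $E,F$ as in \eqref{EFform1}. I would first record the diagonal blocks of the two equivalent forms $E^{-1}\wtil{U}=\wtil{V}F$ and $\wtil{U}F^{-1}=E\wtil{V}$ (the off-diagonal blocks matching automatically); using the shapes in \eqref{EFform1} these give the four identities
\[
VF_{11}=\what{E}_{11}U,\quad I_\cX+F_{22}F_{11}=\what{E}_{21}U,\quad -UF_{22}=E_{11}V,\quad I_\cY+F_{11}F_{22}=E_{21}V.
\]
The whole argument then rests on $\kr U=\cX_2$, $\kr V=\cY_2$ together with the density of $\ran U'$ in $\cX_1'=\ov{\ran U}$ and of $\ran V'$ in $\cY_1'=\ov{\ran V}$.

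For the block structure I would restrict these identities to the kernels. Since $U$ and $V$ vanish on $\cX_2$ and $\cY_2$, the first identity on $\cX_2$ gives $VF_{11}|_{\cX_2}=0$, so $\ran(F_{11}|_{\cX_2})\subseteq\kr V=\cY_2$; this is exactly the vanishing of the $\cX_2\to\cY_1$ corner of $F_{11}$, and I name the $\cX_2\to\cY_2$ corner $E'$. Symmetrically, the third identity on $\cY_2$ forces the $\cY_2\to\cX_1$ corner of $F_{22}$ to vanish, with $\cY_2\to\cX_2$ corner $S$. The second and fourth identities on $\cX_2$ and $\cY_2$ reduce to $F_{22}F_{11}=-I$ on $\cX_2$ and $F_{11}F_{22}=-I$ on $\cY_2$; inserting the two corner factorizations yields $SE'=-I_{\cX_2}$ and $E'S=-I_{\cY_2}$, so $E'$ is invertible with $S=-E'^{-1}$. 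This produces the displayed forms of $F_{11}$ and $F_{22}$, and hence the right half of $E$ through \eqref{EFform1}. For the left half, the third and first identities read $E_{11}V=\sbm{A_{11}V'&0\\A_{21}V'&0}$ and $\what{E}_{11}U=\sbm{B_{11}U'&0\\B_{21}U'&0}$ for the blocks $A_{ij},B_{ij}$ of $E_{11},\what{E}_{11}$; comparison with $-UF_{22}$ and $VF_{11}$ forces $A_{21}V'=0$ and $B_{21}U'=0$, whence $A_{21}=0$ and $B_{21}=0$ by density of the ranges, so $E_{11}$ and $\what{E}_{11}$ are upper triangular. Finally I would take the $\cX_2'\to\cX_2'$ and $\cY_2'\to\cY_2'$ corners of $EE^{-1}=I$ and $E^{-1}E=I$: since $U$ and $V$ annihilate the primed second components, the terms $U\what{E}_{21}$ and $VE_{21}$ drop out there, leaving $A_{22}B_{22}=I_{\cX_2'}$ and $B_{22}A_{22}=I_{\cY_2'}$; thus the corner $A_{22}$ of $E_{11}$ is invertible and I set $F'=A_{22}^{-1}\in\cB(\cX_2',\cY_2')$, giving the entry $F'^{-1}$ in $E_{11}$ and the invertible operators of \eqref{E'F'}.

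With every block now named as in \eqref{EFform2}, the conditions \eqref{EAEcondition1} are simply the surviving off-diagonal blocks. The $\cY_1\to\cX_1'$ part of $-UF_{22}=E_{11}V$ gives $X_1V'=-U'Y_3$, while the $\cY_1\to\cY_1$ and $\cY_1\to\cY_2$ parts of $I_\cY+F_{11}F_{22}=E_{21}V$ give $X_5V'=I_{\cY_1}+Y_1Y_3$ and $E'Y_4=X_6V'-Y_2Y_3$; the first two are exactly the nontrivial entries of the matrix identity in \eqref{EAEcondition1}, and the last, solved using invertibility of $E'$, is the formula for $Y_4$ recorded there. Invertibility of $\sbm{X_1&U'\\X_5&-Y_1}$ is, as noted after \eqref{Eblock}, equivalent to invertibility of $E$. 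For the converse I would reverse this: given invertible $E',F'$ and operators $X_1,X_5,Y_1,Y_3$ with \eqref{Eblock} invertible and satisfying the matrix identity in \eqref{EAEcondition1}, define $E,F$ by \eqref{EFform2} with $Y_4$ as prescribed and the remaining entries arbitrary. Then $F$ is invertible with inverse \eqref{Finvform} by direct multiplication, and $E$ is invertible because \eqref{Eblock} is; the two identities $-UF_{22}=E_{11}V$ and $I_\cY+F_{11}F_{22}=E_{21}V$ hold precisely by \eqref{EAEcondition1}, and since these are the diagonal blocks of $\wtil{U}F^{-1}=E\wtil{V}$ (the off-diagonal ones being automatic), they yield $\wtil{U}=E\wtil{V}F$, that is, the EAE of $U$ and $V$.

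I expect the principal difficulty to be organizational, namely keeping the four independent decompositions of $\cX$ and $\cY$ straight so that each block lands in the correct summand. The only genuinely subtle points are the two invertibility extractions: $E'$, obtained from the two-sided kernel relations $SE'=-I$ and $E'S=-I$, and $F'$, obtained from the matching corner identities in $EE^{-1}=I=E^{-1}E$; both hinge on $U$ and $V$ annihilating the relevant kernel and cokernel summands.
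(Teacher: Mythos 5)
Your proof is correct and follows the same overall strategy as the paper: force the block structure \eqref{EFform2} out of the EAE identity using the kernel and range decompositions, reduce invertibility of $E$ to that of \eqref{Eblock}, and identify \eqref{EAEcondition1} as the surviving conditions. The two arguments differ only in execution. For the second block column of $F$ the paper argues directly that $F$ maps $\kr (U\oplus I)$ onto $\kr (V\oplus I)$, which is exactly what your restriction of $V F_{11}=\what{E}_{11}U$ and $I+F_{22}F_{11}=\what{E}_{21}U$ to $\cX_2$ encodes. For the second block row of $E$ the paper appeals to $E^{-1}$ carrying a complement of the (closure of the) range of $V\oplus I$ onto a complement of that of $U\oplus I$; your route --- killing the $(2,1)$ corners of $E_{11}$ and $\what{E}_{11}$ via $E_{11}V=-UF_{22}$, $\what{E}_{11}U=VF_{11}$ and density of $\ran U'$ in $\cX_1'$ and of $\ran V'$ in $\cY_1'$, then reading invertibility of the $(2,2)$ corner off the $\cX_2'$- and $\cY_2'$-corners of $EE^{-1}=I=E^{-1}E$ --- is more elementary and arguably cleaner than the paper's ``complement onto complement'' phrasing, though it buys the same conclusion. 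Two small remarks. First, your formula $Y_4=E'^{-1}(X_6V'-Y_2Y_3)$ is the dimensionally correct one ($X_6V'-Y_2Y_3$ maps $\cY_1$ into $\cY_2$, so the left factor must be $E'^{-1}\in\cB(\cY_2,\cX_2)$); the $F'^{-1}$ printed in \eqref{EAEcondition1} is a typo in the paper, as confirmed by \eqref{X1X5Y4}, so this is not a defect of your argument. Second, the four block identities you record are the first-column blocks of $E^{-1}\wtil{U}=\wtil{V}F$ and $\wtil{U}F^{-1}=E\wtil{V}$ (it is the second-column blocks that match automatically), not the ``diagonal'' ones; the identities themselves, and everything you do with them, are correct.
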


\begin{remark}
The relation between $U'$ and $V'$ in the first line of \eqref{EAEcondition1} and with the operator \eqref{Eblock} invertible means that $U'\in\cB(\cX_1,\cX_1')$ and $V'\in\cB(\cY_1,\cY_1')$ are EAE in the `non-square' sense of \cite{BGK84}, i.e., with possibly $\cX_1\neq\cX_1'$ and $\cY_1\neq\cY_1'$. The relations EAE and MC were originally studied for Banach space operators $U\in\cB(\cX,\cX')$ and $Y\in\cB(\cY,\cY')$, cf., \cite{BGK84}, however, for SC to make sense it is required that $\cX\simeq\cX'$ and $\cY\simeq\cY'$, so that one usually assumes $\cX=\cX'$ and $\cY=\cY'$.
\end{remark}

%

\begin{proof}[\bf Proof of Theorem \ref{T:EAEcomplement}]
Part of the first claim, under the assumption that $U$ and $V$ are relatively regular, was already given in the proof of \cite[Proposition 3.2]{tHR13}, but we will repeat the arguments here for completeness. Assume $U$ and $V$ as in \eqref{UVdec} are EAE with $E$ and $F$ of the form \eqref{EFform1}. Note that it is clear that the right upper corner of $F$ is as in \eqref{EFform2}, and that once it is established that $F_{11}$ and $F_{22}$ are as in \eqref{EFform2}, then the left lower corner of $F$ in \eqref{EFform2} as well as the formula for ${F^{-1}}$ in \eqref{Finvform} also follow. Furthermore, it then also follows that the right upper and right lower corner of $E$ are as in \eqref{EFform2}.

The fact that $E$ and $F$ are invertible operators satisfying \eqref{EAE} implies that $F$ maps $\cX_2=\kr U$ onto $\cY_2=\kr V$ and $E^{-1}$ maps any complement of $\im V$ onto any complement of $\im U$, in particular, $E^{-1}$ maps $\cY_2'$ onto $\cX_2'$. This establishes the zero entries in the second block column of $F$ and in the second block row of $E$ in \eqref{EFform2}, as well as the fact that the remaining entries, $F'^{-1}\in\cB(\cY_2,\cX_2)$ and $E'\in\cB(\cX_2',\cY_2')$ are invertible. Write $F_{22}=\sbm{Y_3 & Y_5\\ Y_4 & Y_6}$, compatible with the decomposition in \eqref{EFform2}. The fact that the left lower corner in $F$ is given by $I+F_{22}F_{11}$ and has zero entries in the right upper and right lower corner yields
\[
Y_5F'=0 \ands I+ Y_6F'=0.
\]
Since $F'$ is invertible, we then obtain that $Y_5=0$ and $Y_6=-F'^{-1}$. Hence $F$ is of the form \eqref{EFform2}. In passing, we also showed that $E$ is as in \eqref{EFform2}. The fact that $-E'$ and $F'^{-1}$ are invertible and appear in $E$ in a block column and block row, respectively, that otherwise only contains zero-operators implies that $E$ is invertible if and only if the $2 \times 2$ block operator matrix obtained by removing the rows and columns that contain $-E'$ and $F'^{-1}$ is invertible. To see this, note that this operator matrix appears after taking the Schur complement of $E$ with respect to $-E'$ and then the Schur complement with respect to $F'^{-1}$. The resulting $2 \times 2$ block operator matrix is as in \eqref{Eblock}, so that we obtain that $E$ is invertible if and only if the operator in \eqref{Eblock} is invertible. Now, writing out the EAE relation \eqref{EAE} with $E$ and $F$ as in \eqref{EFform2}, it follows directly that all that remains to be verified is \eqref{EAEcondition1}. Thus we have proved the first claim of Theorem \ref{T:EAEcomplement}.

For the converse direction, assume that $X_1, X_5, Y_1, Y_3$ are operators such that the block operator \eqref{Eblock} is invertible, and the first identity in \eqref{EAEcondition1} holds. One then easily verifies that with $E$ and $F$ as in \eqref{EFform2}, with $X_2,X_3,X_4, X_6,Y_1,Y_2,Y_3$ arbitrary and $Y_4$ given by \eqref{EAEcondition1}, the EAE relation \eqref{EAE} holds.
\end{proof}

When specialized to the case that $U$ and $V$ are relatively regular, i.e., $U'$ and $V'$ invertible, we obtain the following result.
The necessary and sufficient conditions for EAE of two relatively regular operators were also given in Theorem 2 in \cite{BT92a}, however, only with a sketch of the proof. The characterization of the operators $E$ and $F$ that establish the EAE relation appears to be new.

\begin{theorem}\label{T:EAErelativelyreg}
Let $U\in\cB(\cX)$ and $V\in\cB(\cY)$ be relatively regular Banach space operators as in \eqref{UVdec}, i.e., with $U'$ and $V'$ invertible. Then $U$ and $V$ are EAE if and only if there exist invertible operators $E'$ and $F'$ as in \eqref{E'F'}. Moreover, in that case, the operators $E$ and $F$ of the form \eqref{EFform1} that establish the EAE of $U$ and $V$ are precisely the operators of the form \eqref{EFform2} where $E'$ and $F'$ are arbitrary invertible operators as in \eqref{E'F'}, the operators $Y_1,Y_2,Y_3,X_2,X_3,X_4, X_6$ can be chosen arbitrarily, and the operators $X_1$ $X_5$ and $Y_4$ are given by
\begin{equation}\label{X1X5Y4}
X_1= -U' Y_3 V'^{-1},\quad
X_5= (I + Y_1Y_3)V'^{-1},\quad
Y_4=E'^{-1}(X_6V' -Y_2Y_3).
\end{equation}
\end{theorem}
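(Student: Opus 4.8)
The plan is to specialize Theorem \ref{T:EAEcomplement} to the relatively regular case, where $U'$ and $V'$ are now invertible, and to extract from the abstract condition \eqref{EAEcondition1} explicit formulas for $X_1, X_5, Y_4$. First I would establish the equivalence ``$U$ and $V$ are EAE $\iff$ invertible $E', F'$ as in \eqref{E'F'} exist''. The forward direction is immediate from Theorem \ref{T:EAEcomplement}, since any $E, F$ of the form \eqref{EFform1} establishing EAE must be of the form \eqref{EFform2} and thereby supply the required $E', F'$. For the converse, I would invoke the converse part of Theorem \ref{T:EAEcomplement}: given invertible $E', F'$, it suffices to exhibit operators $X_1, X_5, Y_1, Y_3$ so that \eqref{Eblock} is invertible and the first identity of \eqref{EAEcondition1} holds. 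The natural choice is to take $Y_1 = Y_3 = 0$, defining $X_1, X_5$ by \eqref{X1X5Y4}; I expect the resulting block \eqref{Eblock} to reduce to $\sbm{0 & U' \\ V'^{-1} & 0}$, which is manifestly invertible because $U'$ and $V'$ are.

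The heart of the argument is deriving \eqref{X1X5Y4} from the first identity in \eqref{EAEcondition1}. Since $V'$ is now invertible, I would multiply out the left-hand side
\[
\mat{cc}{X_1 & U'\\ X_5 & -Y_1}\mat{cc}{V' & 0 \\ 0 & I_{\cX_1}}=\mat{cc}{X_1 V' & U'\\ X_5 V' & -Y_1},
\]
and the right-hand side
\[
\mat{cc}{U' & 0 \\ 0 & I_{\cY_1}}\mat{cc}{-Y_3 & I_{\cX_1} \\ I_{\cY_1} +Y_1 Y_3 & - Y_1}=\mat{cc}{-U'Y_3 & U'\\ I_{\cY_1}+Y_1Y_3 & -Y_1},
\]
then compare entries. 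The $(1,2)$ and $(2,2)$ entries match automatically, while the $(1,1)$ entry gives $X_1 V' = -U'Y_3$ and the $(2,1)$ entry gives $X_5 V' = I + Y_1 Y_3$. Using invertibility of $V'$ to solve for $X_1$ and $X_5$ yields exactly the first two formulas in \eqref{X1X5Y4}; the third formula for $Y_4$ is just the second identity of \eqref{EAEcondition1}, carried over verbatim.

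It then remains to check that, with $X_1, X_5$ so determined, the block operator \eqref{Eblock} is automatically invertible for \emph{every} choice of $Y_1, Y_3$, so that these may indeed be chosen arbitrarily as claimed. I would verify this by observing that \eqref{Eblock} factors as
\[
\mat{cc}{X_1 & U'\\ X_5 & -Y_1}=\mat{cc}{-U'Y_3V'^{-1} & U'\\ (I+Y_1Y_3)V'^{-1} & -Y_1}=\mat{cc}{U' & 0\\ 0 & I_{\cY_1}}\mat{cc}{-Y_3V'^{-1} & I_{\cX_1'}\\ (I+Y_1Y_3)V'^{-1} & -Y_1}
\]
and exhibiting an explicit inverse for the second factor (or computing its inverse directly via a Schur-complement argument), using that $U'$ is invertible; this confirms invertibility independently of $Y_1, Y_3$. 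The main obstacle I anticipate is purely bookkeeping: tracking the block decompositions and the domains/codomains of $X_1, X_5, Y_1, Y_3$ (noting that $X_1 \in \cB(\cX_1', \cX_1')$, $X_5 \in \cB(\cX_1', \cY_1)$ after the identification forced by \eqref{EFform2}) so that every product above is well-defined and the inverse of \eqref{Eblock} genuinely exists as a bounded operator. No deep new idea beyond Theorem \ref{T:EAEcomplement} is needed; the content is the simplification afforded by invertibility of $U'$ and $V'$.
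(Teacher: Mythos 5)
Your proposal is correct and follows essentially the same route as the paper: both reduce to the converse part of Theorem \ref{T:EAEcomplement}, observe that invertibility of $V'$ lets one solve the first identity in \eqref{EAEcondition1} entrywise for $X_1$ and $X_5$ (giving \eqref{X1X5Y4}), and verify invertibility of \eqref{Eblock} for arbitrary $Y_1,Y_3$ via the same factorization $\sbm{U'&0\\0&I}\sbm{-Y_3&I\\ I+Y_1Y_3&-Y_1}\sbm{V'^{-1}&0\\0&I}$ that the paper records as \eqref{EblockFact}. The only cosmetic differences are that the paper cites Proposition 1 of \cite{BT92a} for the necessity of $E',F'$ where you invoke the forward part of Theorem \ref{T:EAEcomplement}, and your parenthetical domain bookkeeping has a harmless slip ($X_1\in\cB(\cY_1',\cX_1')$ and $X_5\in\cB(\cY_1',\cY_1)$, consistent with your formulas).
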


\begin{proof}[\bf Proof]
According to Proposition 1 in \cite{BT92a}, when $U$ and $V$ are EAE, invertible operators $E'$ and $F'$ as in \eqref{E'F'} exist. For the converse direction, assume we have invertible operators $E'$ and $F'$ as in \eqref{E'F'}. Using the converse implication in Theorem \ref{T:EAEcomplement}, it suffices to find operators $X_1, X_5, Y_1, Y_3$ such that the block operator \eqref{Eblock} is invertible and the first identity in \eqref{EAEcondition1} holds. Since $U'$ and $V'$ are invertible, we can choose $Y_1$ and $Y_3$ freely, and define $X_1$ and $X_5$ via the identity
\begin{equation}\label{EblockFact}
\mat{cc}{X_1 & U'\\ X_5 & -Y_1}= \mat{cc}{U' & 0 \\ 0 & I}\mat{cc}{-Y_3 & I \\ I +Y_1 Y_3 & - Y_1}\mat{cc}{V'^{-1} & 0 \\ 0 & I},
\end{equation}
which implies that first identity in \eqref{EAEcondition1} holds, and also yields the formulas in \eqref{X1X5Y4}. Since $E'$ and $F'$ are invertible as well as the middle factor in the factorization \eqref{EblockFact} of the block operator \eqref{Eblock}, it follows that the block operator \eqref{Eblock} is invertible. Hence, by Theorem \ref{T:EAEcomplement} we obtain that $U$ and $V$ are EAE. Furthermore, the second part of Theorem \ref{T:EAEcomplement} also tells us that the operators $E$ and $F$ of the form \eqref{EFform1} are as claimed, since the relation \eqref{EAEcondition1} is equivalent to the identities \eqref{X1X5Y4} when $U'$ and $V'$ are invertible.
\end{proof}

Recall that $U$ and $V$ are called {\em strongly equivalent after extension (SEAE)} in case $U$ and $V$ are EAE in such a way that the operators $E$ and $F$ that establish the EAE relation have the left lower corner (from $\cX_0$ to $\cY$) and right upper corner (from $\cY$ to $\cX_0$) invertible, respectively. By Theorem 2 in \cite{BT92b}, see also Theorem 2.4 in \cite{tHR13}, $U$ and $V$ are SEAE if and only if they are SC. It is not directly clear that if $U$ and $V$ are SEAE, then there are also $E$ and $F$ that establish the SEAE and are of the form \eqref{EFform1}. We next show that this is the case.

\begin{lemma}\label{L:SEAEspecialform}
Let $U\in\cB(\cX)$ and $V\in\cB(\cY)$ be SEAE. Then the SEAE can be established with $E$ and $F$ of the form \eqref{EFform1}.
\end{lemma}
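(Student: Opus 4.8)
The plan is to sidestep the normalisation issue entirely by building a pair $(E,F)$ of the form \eqref{EFform1} \emph{directly} from a Schur coupling, rather than trying to massage a given SEAE pair into that form and then checking that the special corners survive. Since $U$ and $V$ are SEAE, the cited Theorem 2 in \cite{BT92b} (equivalently Theorem 2.4 in \cite{tHR13}) produces an operator matrix $M=\sbm{A&B\\ C&D}\in\cB(\cX\oplus\cY)$ with $A$ and $D$ invertible, $U=A-BD^{-1}C$ and $V=D-CA^{-1}B$. I would use exactly this data to write down $E$ and $F$; the resulting pair is essentially the classical coupling-to-equivalence construction, presented in the normal form \eqref{EFform1}.

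Concretely, I would take $\cX_0=\cY$, $\cY_0=\cX$ and set $F_{11}=-D^{-1}C$, $F_{22}=A^{-1}B$, $E_{11}=-BD^{-1}$ and $E_{21}=D^{-1}$ in \eqref{EFform1}, so that (using $I+F_{22}F_{11}=A^{-1}U$)
\[
E=\mat{cc}{-BD^{-1}&U\\ D^{-1}&D^{-1}C},\qquad
F=\mat{cc}{-D^{-1}C&I_\cY\\ A^{-1}U&A^{-1}B}.
\]
These entries are not guessed at random: the structural identities $E_{21}V=I+F_{11}F_{22}$ and $\what{E}_{21}U=I+F_{22}F_{11}$ that are forced by the pattern \eqref{EFform1} collapse, for this Schur-complement data, to $E_{21}V=D^{-1}V$ and $\what{E}_{21}U=A^{-1}U$, which is precisely what dictates the choices $E_{21}=D^{-1}$ and $\what{E}_{21}=A^{-1}$.

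Two verifications then remain, both routine. First, that $E$ and $F$ really are invertible and of the form \eqref{EFform1}: invertibility of $F$ is automatic from the explicit inverse displayed in \eqref{EFform1}, and a direct multiplication checks that $E^{-1}=\sbm{-CA^{-1}&V\\ A^{-1}&A^{-1}B}$, which exhibits $V$ and $F_{22}=A^{-1}B$ in the right upper and lower right corners exactly as the inverse pattern in \eqref{EFform1} requires. Second, that \eqref{EAE} holds, i.e. $E\,\sbm{V&0\\0&I_\cX}\,F=\sbm{U&0\\0&I_\cY}$; on expanding the product every entry reduces to the two Schur-complement identities $U=A-BD^{-1}C$, $V=D-CA^{-1}B$ together with $AA^{-1}=I$ and $DD^{-1}=I$, so I would relegate this to a one-line ``a straightforward computation shows''.

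The payoff, and the whole point of the lemma, is the final step: in this realisation the right upper corner of $F$ equals $I_\cY$ and the left lower corner of $E$ equals $E_{21}=D^{-1}$, both invertible (the latter because $D$ is invertible). Hence $E$ and $F$ establish not merely EAE but SEAE while sitting in the form \eqref{EFform1}, as required. The only place where genuine care is needed is matching the entries so that the coupling corner $E_{21}$ lands on an invertible operator; I expect no obstacle beyond that, precisely because constructing the normal form from $M$ avoids the danger implicit in the lemma's phrasing --- that normalising a \emph{general} SEAE pair to \eqref{EFform1} might spoil the invertible corners.
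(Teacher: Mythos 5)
Your construction is correct, and I checked the computations: with $F_{11}=-D^{-1}C$, $F_{22}=A^{-1}B$, $E_{11}=-BD^{-1}$, $E_{21}=D^{-1}$ the pair $(E,F)$ does have the pattern \eqref{EFform1}, $E^{-1}=\sbm{-CA^{-1}&V\\ A^{-1}&A^{-1}B}$ as claimed, the identity \eqref{EAE} holds, and the corners $I_\cY$ and $D^{-1}$ are invertible, so the pair witnesses SEAE. However, your route is genuinely different from the paper's. The paper starts from an arbitrary SEAE pair $(E,F)$ with $F_{12}$ and $E_{21}$ invertible and pushes it through the normalisation of Lemma 4.1 in \cite{tHR13}, observing that the new $(1,2)$-entry of $F$ becomes $I_\cY$ and the new $(2,1)$-entry of $E$ becomes $F_{12}E_{21}$, both invertible; it then invokes Corollary 4.2 in \cite{tHR13} and Lemma 2.1 in \cite{tHMRRW18} to conclude that the normalised pair is of the form \eqref{EFform1}. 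You instead invoke the full equivalence SEAE $\Leftrightarrow$ SC from \cite{BT92b} (which the paper states immediately before the lemma, so there is no circularity) and rebuild a special-form pair from scratch out of the Schur coupling matrix $M=\sbm{A&B\\C&D}$. What the paper's argument buys is economy of hypotheses --- it only needs the normalisation machinery already developed for EAE and never leaves that framework; what yours buys is complete explicitness --- every entry is written in terms of $A,B,C,D$ and the verification is a self-contained computation, essentially re-deriving the implication SC $\Rightarrow$ SEAE in the normal form \eqref{EFform1}. Either proof is acceptable; if you keep yours, spell out the matrix product verification of \eqref{EAE} (or at least the $(1,1)$ and $(2,2)$ entries) rather than waving at it, since that is where the two Schur complement identities actually get used.
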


\begin{proof}[\bf Proof]
Let $E$ and $F$ be invertible operators that establish the SEAE of $U$ and $V$. Decompose $E$ and $F$ as block operator matrices $E=[E_{ij}]_{i,j=1,2}$ and $F=[F_{ij}]_{i,j=1,2}$ compatible with the EAE identity \eqref{EAE}. Then $F_{12}$ and $E_{21}$ are invertible. In the transfer to the special form \eqref{EFform1}, by Lemma 4.1 in \cite{tHR13} the new $(1,2)$-entry of $F$ becomes $I_\cY$ and the new $(2,1)$-entry of $E$ becomes $F_{12}E_{21}$, both of which are invertible. It then follows as in the proofs of Corollary 4.2 in \cite{tHR13} and Lemma 2.1 in \cite{tHMRRW18} that the new operators that establish the SEAE of $U$ and $V$ are also of the form \eqref{EFform1}.
\end{proof}

\section{An equivalent Banach space operator problem}\label{S:EquivBSOP}

In this section we introduce the following Banach space operator problem which is related to the question whether EAE and SC coincide for relatively regular operators, in a way described below.

\begin{problem}\label{P:BSOP}
Consider Banach spaces $\cV$, $\cW$, $\cZ_1$ and $\cZ_2$ so that
\begin{equation}\label{BanProps}
\cV \oplus \cZ_1 \simeq \cV \oplus \cZ_2,\quad  \cW \oplus \cZ_1 \simeq \cW \oplus \cZ_2.
\end{equation}
Find operators
\begin{equation}\label{Ops}
\begin{aligned}
&A_{12}\in\cB(\cZ_1,\cV),\quad
A_{21}\in\cB(\cV,\cZ_2),\quad
A_{22}\in\cB(\cZ_1,\cZ_2),\\
&\qquad\qquad B_1\in\cB(\cW,\cV),\quad B_2\in\cB(\cV,\cW),
\end{aligned}
\end{equation}
so that the $2 \times 2$ block operator
\begin{equation}\label{T}
T=\mat{cc}{I_\cV-B_1B_2 & A_{12} \\ A_{21} & A_{22}}:\mat{c}{\cV\\ \cZ_1}\to\mat{c}{\cV\\ \cZ_2}
\end{equation}
is invertible.
\end{problem}

If such an invertible operator $T$ exists, then we say that Problem \ref{P:BSOP} associated with the spaces $\cV$, $\cW$, $\cZ_1$, $\cZ_2$ is solvable, or simply that Problem \ref{P:BSOP} is solvable in case there should not be any unclarity about the spaces.

Note that by assumption \eqref{BanProps} invertible operators between $\cV\oplus\cZ_1$ and $\cV\oplus\cZ_2$ exist, the question is whether there exists one of the form \eqref{T}.

We will now explain the connection with EAE and SC. Assume $U$ and $V$ are relatively regular operators as in \eqref{UVdec}. Assume $U$ and $V$ are EAE. We want to know when they are also SC, or, equivalently, SEAE. According to Theorem \ref{T:EAErelativelyreg}, all operators of the form \eqref{EFform1} that establish the EAE relation of $U$ and $V$ are given by \eqref{EFform2} with $Y_1$, $Y_2$, $Y_3$, $X_2$, $X_3$, $X_4$, $X_6$ arbitrary operators (acting between the spaces indicated in \eqref{EFform2}) and $X_1$, $X_5$ and $Y_4$ are given by \eqref{X1X5Y4}. By Lemma \ref{L:SEAEspecialform}, the question whether $U$ and $V$ are also SEAE now reduces to the question whether we can find $E$ and $F$ of the form \eqref{EFform1} with $E_{21}$ invertible. Now set
\[
A_{12}=X_3,\quad A_{21}=X_6 V',\quad A_{22}=X_4,\quad B_1=Y_1,\quad B_2= Y_3,
\]
so that
\begin{equation}\label{VWZ1Z2def}
\cV=\cY_1,\quad \cW=\cX_1,\quad \cZ_1=\cY_2, \quad \cZ_2=\cY_2'.
\end{equation}
It is then clear from Theorem \ref{T:EAErelativelyreg} that $E_{21}$ and $T$ in \eqref{T} are related through
\[
E_{21}\sbm{V'& 0 \\ 0& I}= T,
\]
and thus $E_{21}$ is invertible if and only if $T$ is invertible. Since in the choices for $E$ and $F$ we are free to choose $X_3$, $X_4$, $X_6$, $Y_1$ and $Y_3$, to see that the problem whether $U$ and $V$ are SEAE reduces to the above Banach space operator problem, it remains to show that for our choice of the spaces $\cV$, $\cW$, $\cZ_1$ and $\cZ_2$ condition \eqref{BanProps} is satisfied. Note that $U$ and $V$ being relatively regular, i.e., $U'$ and $V'$ invertible, yields $\cX_1 \simeq \cX_1'$ and $\cY_1 \simeq \cY_1'$, while the assumption that $U$ and $V$ are EAE gives $\cX_2\simeq \cY_2$ and $\cX_2'\simeq \cY_2'$. This implies that
\[
\mat{c}{\cV\\\cZ_1}=\mat{c}{\cY_1\\\cY_2}=\cY=\mat{c}{\cY_1'\\\cY_2'}\simeq \mat{c}{\cY_1\\\cY_2'}=\mat{c}{\cV\\ \cZ_2}
\]
and
\[
\mat{c}{\cW\\\cZ_1}=\mat{c}{\cX_1\\ \cY_2}\simeq\mat{c}{\cX_1\\ \cX_2}=\cX=\mat{c}{\cX_1'\\\cX_2'}\simeq \mat{c}{\cX_1\\ \cY_2'}=\mat{c}{\cW\\ \cZ_2}.
\]
Hence condition \eqref{BanProps} is indeed satisfied.

We summarise the above discussion in the following proposition.

\begin{proposition}
Let $U\in\cB(\cX)$ and $V\in\cB(\cY)$ be relatively regular operator as in \eqref{UVdec} that are EAE.  Then $\cV$, $\cW$, $\cZ_1$ and $\cZ_2$ defined in \eqref{VWZ1Z2def} satisfy \eqref{BanProps}. Moreover, $U$ and $V$ are SEAE if and only if there exists an invertible operator matrix $T$ as in \eqref{T} with operator entries as in \eqref{Ops}.
\end{proposition}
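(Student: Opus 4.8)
The plan is to recognize that this proposition simply packages the reduction carried out in the discussion preceding it, so the proof consists of assembling those observations and then checking that the correspondence between the data establishing EAE and the data of Problem \ref{P:BSOP} is exact in both directions. I would begin with the claim that the spaces in \eqref{VWZ1Z2def} satisfy \eqref{BanProps}. Since $U$ and $V$ are relatively regular, $U'$ and $V'$ are invertible, which gives $\cX_1\simeq\cX_1'$ and $\cY_1\simeq\cY_1'$. Since $U$ and $V$ are EAE, Proposition 1 of \cite{BT92a} (equivalently, the existence of the invertible operators $E'$ and $F'$ in \eqref{E'F'}) gives $\cX_2\simeq\cY_2$ and $\cX_2'\simeq\cY_2'$. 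Substituting $\cV=\cY_1$, $\cW=\cX_1$, $\cZ_1=\cY_2$, $\cZ_2=\cY_2'$, it then suffices to chain these isomorphisms: $\cV\oplus\cZ_1=\cY_1\oplus\cY_2=\cY=\cY_1'\oplus\cY_2'\simeq\cY_1\oplus\cY_2'=\cV\oplus\cZ_2$, and likewise $\cW\oplus\cZ_1=\cX_1\oplus\cY_2\simeq\cX_1\oplus\cX_2=\cX=\cX_1'\oplus\cX_2'\simeq\cX_1\oplus\cY_2'=\cW\oplus\cZ_2$. This part is pure bookkeeping with the isomorphisms just recorded.

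For the equivalence I would first reduce SEAE to the special form. By definition, $U$ and $V$ are SEAE exactly when they are EAE with the lower-left corner of $E$ and the upper-right corner of $F$ both invertible; by Lemma \ref{L:SEAEspecialform} this can always be arranged with $E$ and $F$ of the form \eqref{EFform1}, in which the upper-right corner of $F$ is $I_\cY$ and hence automatically invertible. Thus $U$ and $V$ are SEAE if and only if there exist $E$ and $F$ of the form \eqref{EFform1} establishing the EAE of $U$ and $V$ with the lower-left block $E_{21}$ invertible, where $E_{21}=\sbm{X_5 & X_3\\ X_6 & X_4}$ is read off from \eqref{EFform2}. Theorem \ref{T:EAErelativelyreg} parametrizes all such $E$ and $F$: the entries $Y_1,Y_2,Y_3,X_2,X_3,X_4,X_6$ are free while $X_1,X_5,Y_4$ are fixed by \eqref{X1X5Y4}. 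Under the identification $A_{12}=X_3$, $A_{21}=X_6V'$, $A_{22}=X_4$, $B_1=Y_1$, $B_2=Y_3$, the block $E_{21}$ satisfies $E_{21}\,\sbm{V'&0\\0&I}=T$ with $T$ as in \eqref{T}; since $V'$ is invertible, $E_{21}$ is invertible if and only if $T$ is invertible.

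It remains to confirm that this correspondence is onto in both directions, which is where the (modest) real content lies. Passing from a solution $T$ to an SEAE, I would recover $X_3,X_6,X_4,Y_1,Y_3$ from the five operators $A_{12},A_{21},A_{22},B_1,B_2$ in \eqref{Ops} (using invertibility of $V'$ to set $X_6=A_{21}V'^{-1}$), choose the remaining free parameters $Y_2,X_2$ arbitrarily, and assemble $E$ and $F$ via \eqref{EFform2}; Theorem \ref{T:EAErelativelyreg} guarantees these establish the EAE, and $E_{21}$ is invertible precisely because $T$ is. Conversely, any special-form SEAE-establisher yields, through the same identification, admissible Problem-data with $T$ invertible. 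The one point requiring genuine care — and the main thing to get right rather than a deep obstacle — is the exact matching of the free and fixed parameters between the two parametrizations, and in particular verifying the identity $E_{21}\,\sbm{V'&0\\0&I}=T$ entrywise from \eqref{EFform2} and \eqref{X1X5Y4}, keeping track of the sign conventions in the $(1,1)$-entry (where $X_5$ is forced by \eqref{X1X5Y4} and must be reconciled with $I_\cV-B_1B_2$). Once this entrywise identity is confirmed, the equivalence follows immediately and the proposition is proved.
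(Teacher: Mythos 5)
Your proposal is correct and follows essentially the same route as the paper, which proves this proposition by the discussion immediately preceding it: the chain of isomorphisms for \eqref{BanProps}, the reduction via Lemma \ref{L:SEAEspecialform} and Theorem \ref{T:EAErelativelyreg} to the invertibility of $E_{21}$, and the identification $E_{21}\sbm{V'&0\\0&I}=T$. The sign point you flag in the $(1,1)$-entry ($X_5V'=I+Y_1Y_3$ versus $I_\cV-B_1B_2$) is real but harmless, since $Y_3$ ranges over all of $\cB(\cV,\cW)$, so the two parametrizations of the corner coincide.
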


The above shows how for any relatively regular EAE operators $U$ and $V$ a version of Problem \eqref{P:BSOP} can be set up in such a way that solvability of this problem coincides with $U$ and $V$ being SEAE. It is also the case that with any version of Problem \ref{P:BSOP} we can associate relatively regular EAE operators $U$ and $V$ so that Problem \ref{P:BSOP} is the derived problem for these operators $U$ and $V$. Specifically, for Banach spaces $\cV$, $\cW$, $\cZ_1$ and $\cZ_2$ satisfying \eqref{BanProps}, let $U'\in\cB(\cW)$ and $V'\in\cB(\cV)$ be invertible operators (in particular $U'=I_\cW$ and $V'=I_\cV$ is a possibility), set $\cX=\cW\oplus\cZ_1$ and $\cY=\cV\oplus\cZ_1$, and define
\begin{equation}\label{UCcounter}
\begin{aligned}
U&=\mat{cc}{U'&0\\0&0}:\mat{c}{\cW\\\cZ_1}\to \mat{c}{\cW\\\cZ_2}\simeq \mat{c}{\cW\\\cZ_1},\\
V&=\mat{cc}{V'&0\\0&0}:\mat{c}{\cV\\\cZ_1}\to \mat{c}{\cV\\\cZ_2}\simeq \mat{c}{\cV\\\cZ_1}.
\end{aligned}
\end{equation}
Clearly $U$ and $V$ are EAE, by Theorem \ref{T:EAErelativelyreg}. If there exist operators $A_{12}$, $A_{21}$, $A_{22}$, $B_1$ and $B_2$ as in \eqref{Ops} such that $T$ in \eqref{T} is invertible, then $U$ and $V$ are SEAE, independently of the choice of $U'$ and $V'$, and if there are no such operators that make $T$ invertible, then $U$ and $V$ are not SEAE, again independently of the choice of $U'$ and $V'$. The value of this observation is that it provides a way to construct operators that are EAE but not SC.

\begin{corollary}\label{C:EAEnotSC}
Let $\cV$, $\cW$, $\cZ_1$ and $\cZ_2$ be Banach spaces satisfying \eqref{BanProps} so that the associated Problem \ref{P:BSOP} is not solvable. Then for any invertible operators $U'\in\cB(\cW)$ and $V'\in\cB(\cV)$, the operators $U$ and $V$ in \eqref{UCcounter} are EAE but not SEAE.
\end{corollary}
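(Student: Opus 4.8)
The plan is to recognize that Corollary \ref{C:EAEnotSC} is nothing more than the negative half of the discussion preceding it, phrased as a self-contained statement, so that the proof amounts to checking that the operators $U$ and $V$ of \eqref{UCcounter} fall under the hypotheses of the Proposition and that the instance of Problem \ref{P:BSOP} they give rise to is exactly the one assumed unsolvable. I would therefore invoke the Proposition directly, rather than re-derive the correspondence between $E_{21}$ and $T$.

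First I would put $U$ and $V$ from \eqref{UCcounter} into the standard form \eqref{UVdec}. Reading off the blocks, $U$ is relatively regular with $\cX_1=\cX_1'=\cW$, $\kr U=\cX_2=\cZ_1$, $\cokr U=\cX_2'=\cZ_2$ and invertible corner $U'\in\cB(\cW)$; likewise $V$ has $\cY_1=\cY_1'=\cV$, $\kr V=\cY_2=\cZ_1$, $\cokr V=\cY_2'=\cZ_2$ and invertible corner $V'\in\cB(\cV)$. In particular $\kr U$ and $\kr V$ are both $\cZ_1$ and $\cokr U$ and $\cokr V$ are both $\cZ_2$, so the invertible operators $E'=I_{\cZ_1}$ and $F'=I_{\cZ_2}$ demanded in \eqref{E'F'} exist, and Theorem \ref{T:EAErelativelyreg} yields at once that $U$ and $V$ are EAE. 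This disposes of the first assertion of the corollary.

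Next I would apply the Proposition to this $U$ and $V$. The spaces it associates to them through \eqref{VWZ1Z2def} are $\cY_1$, $\cX_1$, $\cY_2$ and $\cY_2'$, which by the identifications of the previous step are precisely the given spaces $\cV$, $\cW$, $\cZ_1$ and $\cZ_2$. Hence the version of Problem \ref{P:BSOP} derived from $U$ and $V$ coincides with the one in the hypothesis. The Proposition then asserts that $U$ and $V$ are SEAE if and only if this problem is solvable; since by assumption it is not solvable, $U$ and $V$ cannot be SEAE, which is the second assertion.

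I do not expect a genuine obstacle here: all the analytic content is carried by Theorem \ref{T:EAErelativelyreg} and the Proposition, and the corollary is essentially bookkeeping. The only point requiring care is the space identification, keeping the kernel $\cZ_1$ and the cokernel $\cZ_2$ in their correct roles so that the derived problem matches the hypothesis verbatim. I would also note that the spaces \eqref{VWZ1Z2def} produced by the Proposition depend only on $\cV$, $\cW$, $\cZ_1$ and $\cZ_2$, and not on $U'$ or $V'$; since Problem \ref{P:BSOP} and the invertibility of $T$ in \eqref{T} are phrased purely in terms of these spaces and the free entries \eqref{Ops} (the factor $V'$ being absorbed invertibly into $A_{21}=X_6V'$), the same unsolvable problem arises for every choice of the invertible operators $U'$ and $V'$, which is exactly what the phrase ``for any'' in the statement requires.
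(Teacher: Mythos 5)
Your proposal is correct and follows essentially the same route as the paper, which proves this corollary only implicitly through the discussion preceding it: EAE comes from Theorem \ref{T:EAErelativelyreg} (the kernels are both $\cZ_1$ and the cokernels both $\cZ_2$, so $E'$ and $F'$ as in \eqref{E'F'} exist), and the failure of SEAE comes from matching the derived instance of Problem \ref{P:BSOP} via \eqref{VWZ1Z2def} with the given unsolvable one. Your closing remark that the derived problem is independent of $U'$ and $V'$ (the factor $V'$ in $A_{21}=X_6V'$ being absorbed by invertibility) is exactly the point the paper makes to justify the phrase ``for any invertible operators $U'$, $V'$.''
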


\section{A Banach space property}\label{S:BSP}

In this section we further analyse a Banach space property considered in \cite{tHMRR19}, which we will also encounter in the following sections.

\begin{definition}
Given Banach space $\cZ$ and an integer $k>0$, we say that $\cZ$ is {\em stable under finite dimensional quotients of dimension $k$}, abbreviated to $\cZ$ is {\em SUFDQ$_k$}, if for any finite dimensional subspace $\cF\subset \cZ$ with $\dim (\cF) =k$ we have $\cZ\simeq \cZ/\cF$. Furthermore, we say that $\cZ$ is {\em stable under finite dimensional sums of dimension $k$}, abbreviated to $\cZ$ is {\em SUFDS$_k$}, if $\cZ\simeq \cZ\oplus \cF$ for any finite dimensional Banach space $\cF$ with $\dim (\cF)=k$.
\end{definition}

Note that if $\cZ$ is SUFDQ$_k$ (respectively, SUFDS$_k$) then $\cZ$ is also SUFDQ$_{nk}$ (respectively SUFDS$_{nk}$) for all integers $n>0$. In particular, if $\cZ$ is SUFDQ$_1$ (respectively, SUFDS$_1$), then $\cZ$ is SUFDQ$_k$ (respectively, SUFDS$_k$) for all integers $k>0$. Therefore, we shall abbreviate SUFDQ$_1$ and SUFDS$_1$ by SUFDQ and SUFDS, respectively.

The Banach space properties SUFDQ and SUFDS appeared in \cite{tHMRR19}, and it was shown there that SUFDQ implies SUFDS \cite[Lemma 3.3]{tHMRR19}, as well as that the Banach space sum of two spaces that are SUFDQ is also SUFDQ \cite[Proposition 3.2]{tHMRR19}. As also observed in \cite{tHMRR19}, all primary Banach spaces are SUFDQ, hence they are SUFDQ$_k$ for all integers $k>0$.

\begin{proposition}\label{P:BanPropEquiv}
For all integer $k>0$ and Banach spaces $\cZ$ the following are equivalent:
\begin{itemize}
  \item[(i)] $\cZ$ is SUFDQ$_k$;
  \item[(ii)] $\cZ$ is SUFDS$_k$;
  \item[(iii)] in $\cB(\cZ)$ there exist Fredholm operators of index $k$;
  \item[(iv)] in $\cB(\cZ)$ there exist Fredholm operators of index $-k$.
\end{itemize}
\end{proposition}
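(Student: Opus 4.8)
The plan is to put condition (ii) at the centre and to isolate a single geometric fact that bridges the ``sum'' version (ii) and the ``quotient'' version (i). The fact I would record first is a \emph{Key Lemma}: \emph{in any Banach space $\cZ$, all complemented subspaces of codimension $k$ are mutually isomorphic.} Given two such subspaces $\cG_1,\cG_2$, note that $\cG_1\cap\cG_2$ is closed and that the composite $\cG_1\hookrightarrow\cZ\to\cZ/\cG_2$ has kernel $\cG_1\cap\cG_2$, so $\cG_1/(\cG_1\cap\cG_2)$ embeds in $\cZ/\cG_2$ and has some finite dimension $m_1\le k$; symmetrically one gets $m_2\le k$. Counting codimensions along $\cG_1\cap\cG_2\subset\cG_1\subset\cZ$ and along $\cG_1\cap\cG_2\subset\cG_2\subset\cZ$ gives $m_1+k=\codim_\cZ(\cG_1\cap\cG_2)=m_2+k$, so $m_1=m_2=:m$. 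As a finite-codimensional closed subspace is complemented, $\cG_i\simeq(\cG_1\cap\cG_2)\oplus\cF_i$ with $\dim\cF_i=m$, whence $\cG_1\simeq\cG_2$. Alongside this I would record the elementary translations: for a $k$-dimensional $\cF\subset\cZ$ with complement $\cG$ one has $\cZ/\cF\simeq\cG$, and $\cG$ is precisely a complemented codimension-$k$ subspace; moreover all $k$-dimensional Banach spaces are isomorphic. Thus (i) asserts that \emph{every} complemented codimension-$k$ subspace is isomorphic to $\cZ$, while (ii) asserts $\cZ\simeq\cZ\oplus\cF$ for one, hence every, $k$-dimensional $\cF$.

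Next I would dispatch the ``easy'' cycle (ii)$\Leftrightarrow$(iii)$\Leftrightarrow$(iv) via the standard Fredholm decomposition, with no appeal to the Key Lemma. For Fredholm $T\in\cB(\cZ)$ write $\cZ=\kr T\oplus\cZ_0$ and $\cZ=\ran T\oplus\cC$, so $\cZ_0\simeq\ran T$ and, setting $p=\dim\kr T$, $q=\dim\cokr T$, one has $\cZ\simeq\ran T\oplus\cF_p\simeq\ran T\oplus\cF_q$ with $\dim\cF_p=p$, $\dim\cF_q=q$. Since $|p-q|=|\ind T|$, appending a $k$-dimensional space to the side carrying the smaller summand turns one expression into the other and yields $\cZ\oplus\cF\simeq\cZ$ with $\dim\cF=|\ind T|$; so (iii) (index $+k$) and (iv) (index $-k$) each imply (ii). Conversely, from an isomorphism $\cZ\simeq\cZ\oplus\cF$ with $\dim\cF=k$, the canonical projection is a surjection with $k$-dimensional kernel (index $+k$) and the canonical inclusion is an injection with $k$-codimensional complemented range (index $-k$), so (ii) implies both (iii) and (iv).

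Finally I would connect (i). The direction (i)$\Rightarrow$(iii) is immediate: picking any $k$-dimensional $\cF$, the quotient map $\cZ\to\cZ/\cF$ followed by the isomorphism $\cZ/\cF\simeq\cZ$ supplied by (i) is a surjection with $k$-dimensional kernel, hence Fredholm of index $+k$. The reverse implication, (ii)$\Rightarrow$(i) (that is, SUFDS$_k\Rightarrow$ SUFDQ$_k$), is the real content of the proposition and the step I expect to be the main obstacle, since it looks like an illegitimate ``cancellation'' of a $k$-dimensional summand; this is exactly where the Key Lemma is decisive. From $\cZ\simeq\cZ\oplus\cF$ the image of $\cZ\oplus\{0\}$ is a complemented codimension-$k$ subspace isomorphic to $\cZ$; by the Key Lemma \emph{every} complemented codimension-$k$ subspace is isomorphic to it, hence to $\cZ$, and since $\cZ/\cF\simeq\cG$ for the complement $\cG$ of an arbitrary $k$-dimensional $\cF$, we conclude $\cZ/\cF\simeq\cZ$. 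The only delicate remaining point is the degenerate range $\dim\cZ<k$: there (i) holds vacuously while (ii)--(iv) fail, so the statement is to be read for spaces admitting a $k$-dimensional subspace (in particular all infinite-dimensional $\cZ$, which is the case of interest); for finite-dimensional $\cZ$ with $\dim\cZ\ge k$ all four conditions are plainly false and the equivalence is trivial.
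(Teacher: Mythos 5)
Your proof is correct, but it is organized differently from the paper's. The paper proves (i)$\Leftrightarrow$(ii) by citing two lemmas from \cite{tHMRR19}: the implication (i)$\Rightarrow$(ii) is obtained by repeating the argument of Lemma 3.3 there, and (ii)$\Rightarrow$(i) by applying the finite-dimensional cancellation result Lemma 3.4 to $\cW\oplus\cF=\cZ\simeq\cZ\oplus\cF$; it then handles (ii)$\Leftrightarrow$(iii) and (ii)$\Leftrightarrow$(iv) by explicit operator constructions (building an invertible $\sbm{F\\ \Xi}\in\cB(\cZ,\cZ\oplus\cF)$ from a surjective Fredholm $F$ of index $k$, and conversely perturbing an isomorphism $T=\sbm{F\\ \Xi}$ by the finite-rank entry $\Xi$ and invoking stability of the index). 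You instead make the argument self-contained by proving the underlying cancellation fact directly --- your Key Lemma that all complemented subspaces of a fixed finite codimension $k$ are mutually isomorphic, via the codimension count along $\cG_1\cap\cG_2\subset\cG_i\subset\cZ$, which is sound --- and you route (i)$\Rightarrow$(ii) through (iii) by observing that the quotient map composed with the isomorphism $\cZ/\cF\simeq\cZ$ is a surjective Fredholm operator of index $k$; your treatment of (ii)$\Rightarrow$(iii) via the coordinate projection is the same operator as the paper's $F=\pi T$, just with its index computed more directly. What your approach buys is independence from the cited lemmas and a cleaner identification of the single geometric fact doing the work; what the paper's buys is brevity by reuse of \cite{tHMRR19}. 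Your closing remark about the degenerate case $\dim\cZ<k$, where (i) holds vacuously while (ii)--(iv) fail, is a legitimate observation the paper does not address, though it is irrelevant in the infinite-dimensional setting where the proposition is used.
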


\begin{proof}[\bf Proof]
The implication (i) $\Rightarrow$ (ii) follows by the same argument as in the proof of Lemma 3.3 in \cite{tHMRR19}, if one restricts the finite dimensional Banach spaces to those having dimension $k$.

Next we prove (ii) $\Rightarrow$ (i). Let $\cF\subset \cZ$ with $\dim (\cF)=k$ and let $\cW$ be a complement of $\cF$ in $\cZ$, so that $\cW\simeq \cZ/\cF$. Since $\cZ$ is SUFDS$_k$, we have
\[
\cW\oplus \cF=\cZ\simeq \cZ\oplus\cF.
\]
By Lemma 3.4 in \cite{tHMRR19}, which applies since $\dim (\cF)<\infty$, we have $\cZ/\cF\simeq \cW\simeq \cZ$. Thus (i) holds.

For the equivalence of (ii) and (iii), first assume (iii) holds and let $\cF$ be a Banach space of dimension $k$. Let $F\in\cB(\cZ)$ be a Fredholm operator with index $k$. Without loss of generality $F$ is surjective, so that $\dim (\kr F)=k$. Now let $\Xi\in\cB(\cZ,\cF)$ be any operator that maps $\kr F$ onto $\cF$ and is zero on a complement of $\kr F$ in $\cZ$. Then $\sbm{F\\\Xi}$ is an invertible operator in $\cB(\cZ,\cZ\oplus\cF)$, hence $\cZ\simeq\cZ\oplus \cF$. Thus (iii) implies (ii).

Conversely, assume (ii) holds. Let $\cF$ be any Banach space of dimension $k$. By assumption $\cZ\simeq\cZ\oplus\cF$. Now let $T$ be an isomorphism between $\cZ$ and $\cZ\oplus\cF$ and decompose $T$ as $T=\sbm{F\\\Xi}$ with $F\in \cB(\cZ,\cZ)$ and $\Xi\in \cB(\cZ,\cF)$. In particular, $T$ is Fredholm with index 0. Since $\Xi$ is a finite rank operator, $T_0:=\sbm{F\\ 0}=T-\sbm{0\\\Xi}$ is also Fredholm with index 0. Since $\dim (\kr T_0)=\dim (\kr F)$ and $\dim (\cokr T_0) =\dim (\cokr F) +k$, it follows that $F$ is Fredholm with index $k$.

Similarly one proves that (ii) and (iv) are equivalent. For the implication (ii) $\Rightarrow$ (iv), to see that there exists a Fredholm operator in $\cB(\cZ)$ with index $-k$ a similar argument applies based on an invertible operator in $\cB(\cZ\oplus\cF,\cZ)$. Conversely, assuming there exists a Fredholm operator with index $-k$ in $\cB(\cZ)$, one can modify the argument in the proof of the implication  (iii) $\Rightarrow$ (ii) to construct an invertible operator from $\cZ\oplus \cF$ to $\cZ$.
\end{proof}

We conclude this section with a few additional observations and comments.

\begin{lemma}\label{L:BSPsum}
Let $\cZ$ be SUFDQ$_k$. Then $\cW\oplus \cZ$ is SUFDQ$_k$ for any Banach space $\cW$.
\end{lemma}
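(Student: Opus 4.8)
The plan is to avoid reasoning about quotients directly. The apparent difficulty is that a $k$-dimensional subspace $\cF$ of $\cW\oplus\cZ$ need not sit compatibly with the direct sum decomposition — it may be in ``diagonal'' position — so there is no obvious way to push it into the $\cZ$-summand and invoke the SUFDQ$_k$ hypothesis on $\cZ$. Instead I would pass to the equivalent sum formulation via Proposition \ref{P:BanPropEquiv}, where stability under direct sums is essentially automatic, and then convert back.

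Concretely, the first step is to record that, since $\cZ$ is SUFDQ$_k$, Proposition \ref{P:BanPropEquiv} gives that $\cZ$ is SUFDS$_k$; that is, $\cZ\oplus\cF\simeq\cZ$ for every finite dimensional Banach space $\cF$ with $\dim(\cF)=k$. The second step is then the one-line computation showing that $\cW\oplus\cZ$ is itself SUFDS$_k$: for any such $\cF$,
\[
(\cW\oplus\cZ)\oplus\cF\simeq \cW\oplus(\cZ\oplus\cF)\simeq \cW\oplus\cZ,
\]
where the first isomorphism is commutativity and associativity of the direct sum and the second uses the previous step. Finally, applying the reverse implication of Proposition \ref{P:BanPropEquiv} to the space $\cW\oplus\cZ$ converts SUFDS$_k$ back into SUFDQ$_k$, which is exactly the claim.

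I expect no serious obstacle here, precisely because the one genuinely subtle point — the possibly diagonal position of a $k$-dimensional subspace inside $\cW\oplus\cZ$ — is sidestepped by the SUFDS$_k$ reformulation, whose sum side only ever adjoins an abstract $k$-dimensional space rather than quotienting out a subspace in general position. The entire content of the argument is therefore Proposition \ref{P:BanPropEquiv}, used in both directions; I would note in passing that the stronger sum result \cite[Proposition 3.2]{tHMRR19} does not apply directly, since it requires \emph{both} summands to be SUFDQ, whereas here $\cW$ is arbitrary.
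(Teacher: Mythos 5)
Your proof is correct and follows essentially the same route as the paper, which simply notes that the lemma is straightforward from property (ii) (or (iii)) of Proposition \ref{P:BanPropEquiv}; you have fleshed out the (ii)-route, passing to SUFDS$_k$, absorbing $\cF$ into the $\cZ$-summand, and converting back.
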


\begin{proof}[\bf Proof]
This result is straightforward from the fact that $\cZ$ has property (ii) (or (iii)) in Proposition \ref{P:BanPropEquiv}.
\end{proof}

\begin{lemma}\label{L:BSPfindimcomp}
Let $\cZ$ be SUFDQ$_k$ and let $\cF\subset\cZ$ be finite dimensional. Then $\cZ/\cF$ is SUFDQ$_k$.
\end{lemma}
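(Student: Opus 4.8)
The plan is to pass from the quotient formulation (SUFDQ$_k$) to the direct-sum formulation (SUFDS$_k$) via Proposition \ref{P:BanPropEquiv}, and then exploit finite-dimensional cancellation. First I would record that, since $\cF$ is finite dimensional, it is complemented in $\cZ$; fix a complement $\cW$, so that $\cZ=\cW\oplus\cF$ and $\cZ/\cF\simeq\cW$. By the equivalence (i) $\Leftrightarrow$ (ii) in Proposition \ref{P:BanPropEquiv} it then suffices to show that $\cW$ is SUFDS$_k$, that is, $\cW\simeq\cW\oplus\cG$ for every Banach space $\cG$ with $\dim(\cG)=k$.

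Fix such a $\cG$. Since $\cZ$ is SUFDQ$_k$, hence SUFDS$_k$ by Proposition \ref{P:BanPropEquiv}, we have $\cZ\simeq\cZ\oplus\cG$. Rewriting both sides in terms of the decomposition $\cZ=\cW\oplus\cF$ gives $\cW\oplus\cF\simeq(\cW\oplus\cG)\oplus\cF$. The crux is now to cancel the finite-dimensional summand $\cF$: by Lemma 3.4 in \cite{tHMRR19}, which applies precisely because $\dim(\cF)<\infty$, we conclude $\cW\simeq\cW\oplus\cG$. As $\cG$ was an arbitrary space of dimension $k$, this shows $\cW$ is SUFDS$_k$, hence SUFDQ$_k$ by Proposition \ref{P:BanPropEquiv}, and therefore $\cZ/\cF\simeq\cW$ is SUFDQ$_k$.

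I expect no serious obstacle: the argument is essentially the same cancellation manoeuvre already used to prove (ii) $\Rightarrow$ (i) in Proposition \ref{P:BanPropEquiv}. The one point requiring care is the appeal to finite-dimensional cancellation, namely that $\cA\oplus\cF\simeq\cB\oplus\cF$ with $\cF$ finite dimensional forces $\cA\simeq\cB$; this is exactly the content of Lemma 3.4 in \cite{tHMRR19}, and it is what lets us strip off $\cF$ even though $\cF$ need not have dimension $k$ (so $\cF$ plays only a passive, cancellable role here). An alternative route would argue through characterization (iii), transporting a Fredholm operator of index $k$ on $\cZ$ to one on $\cW$ via the projection along $\cF$; but that requires tracking the finite-rank corrections to kernels and cokernels, so the direct-sum/cancellation approach seems cleanest.
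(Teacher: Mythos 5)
Your proof is correct, but it takes a different route from the paper's. The paper works with characterization (iii) of Proposition \ref{P:BanPropEquiv}: it takes a Fredholm operator $T$ of index $k$ on $\cZ$, writes it as a $2\times 2$ block matrix with respect to $\cZ=\cZ_1\oplus\cF$ (where $\cZ_1$ is a complement of $\cF$), discards the three blocks touching $\cF$ as a finite-rank perturbation, and checks that the surviving corner $T_{11}\in\cB(\cZ_1)$ is Fredholm of index $k$ by tracking how the kernel and cokernel change. You instead work with characterization (ii): you convert SUFDQ$_k$ to SUFDS$_k$, write $\cW\oplus\cF\simeq(\cW\oplus\cG)\oplus\cF$, and cancel the finite-dimensional summand $\cF$ via Lemma 3.4 of \cite{tHMRR19}. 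Both arguments are sound and of comparable length; yours avoids the kernel/cokernel bookkeeping entirely at the cost of invoking the external cancellation lemma (which the paper already uses elsewhere, e.g.\ in the proof of (ii) $\Rightarrow$ (i) of Proposition \ref{P:BanPropEquiv}, so this is no real cost), while the paper's version has the mild advantage of explicitly producing the Fredholm operator of index $k$ on the complement, in keeping with the Fredholm-theoretic phrasing of the surrounding corollaries. Your parenthetical remark about the alternative route through characterization (iii) is exactly the paper's proof, and the ``finite-rank corrections'' you flag there are indeed the only thing it has to verify.
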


\begin{proof}[\bf Proof]
Let $\cZ_1$ be a complement of $\cF$ in $\cZ$. It suffices to prove that $\cZ_1$ is SUFDQ$_k$. Let $T\in\cB(\cZ)$ be a Fredholm operator of index $k$ and write $T=\sbm{T_{11}&T_{12}\\ T_{21}& T_{22}}$ with respect to the decomposition $\cZ=\cZ_1\oplus\cF$. Then $T_1:=\sbm{T_{11}&0\\0&0}=T-\sbm{0&T_{12}\\ T_{21}& T_{22}}$ is Fredholm with index $k$ because $\sbm{0&T_{12}\\ T_{21}& T_{22}}$ is finite rank. We have $\kr T_1 = \kr T_{11} \oplus \cF$ and $\cokr T_1 = \cokr T_{11} \oplus \cF$. Thus $\ind(T_{11})=\ind(T_{1})=k$ and $T_{11}\in\cB(\cZ_1)$. Hence $\cZ_1$ is SUFDQ$_k$.
\end{proof}

\begin{corollary}\label{C:BSPkTo-k}
For any integer $k$ and Banach space $\cZ$, $\cB(\cZ)$ contains Fredholm operators of index $k$ if and only if $\cB(\cZ)$ contains Fredholm operators of index $-k$.
\end{corollary}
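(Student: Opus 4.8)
The plan is to reduce the statement to the equivalence of conditions (iii) and (iv) in Proposition \ref{P:BanPropEquiv}, which already carries all the real content; everything else is bookkeeping over the sign of the index. The crucial observation is that the assertion is manifestly symmetric under the substitution $k\mapsto -k$: the two conditions ``$\cB(\cZ)$ contains a Fredholm operator of index $k$'' and ``$\cB(\cZ)$ contains a Fredholm operator of index $-k$'' simply exchange roles under this substitution. Hence I would argue that it suffices to treat the case $k\geq 0$.

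First I would dispose of the trivial case $k=0$: the identity operator $I_\cZ$ is Fredholm of index $0$ on every Banach space $\cZ$, so both conditions hold unconditionally, and the equivalence is vacuously true. For $k>0$, I would then simply invoke Proposition \ref{P:BanPropEquiv}: the existence of a Fredholm operator of index $k$ in $\cB(\cZ)$ is condition (iii), the existence of a Fredholm operator of index $-k$ is condition (iv), and these are shown there to be equivalent (both being equivalent to $\cZ$ being SUFDQ$_k$). Combining this with the symmetry remark settles all remaining integer values of $k$: for $k<0$ one writes $k=-m$ with $m>0$ and reads the equivalence of (iii) and (iv) for the positive integer $m$.

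I do not expect any genuine obstacle here, as the substantive work — the passage between index $+k$ and index $-k$ via the stability properties SUFDQ$_k$ and SUFDS$_k$ — has already been carried out in the proof of Proposition \ref{P:BanPropEquiv}. The only thing to be careful about is that Proposition \ref{P:BanPropEquiv} is stated for a fixed positive integer $k$, so I would make explicit that the corollary's ``any integer $k$'' is covered by the $k=0$ case together with the symmetric reduction of negative indices to positive ones. In short, the corollary is best viewed as a convenient index-symmetric repackaging of the (iii)$\Leftrightarrow$(iv) part of Proposition \ref{P:BanPropEquiv}.
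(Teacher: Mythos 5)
Your proposal is correct and matches the paper's intent exactly: the corollary is stated without proof precisely because it is the (iii)$\Leftrightarrow$(iv) equivalence of Proposition~\ref{P:BanPropEquiv} for $k>0$, combined with the trivial case $k=0$ and the $k\mapsto -k$ symmetry for negative $k$. Nothing is missing.
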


\begin{corollary}\label{C:BSPlargerk}
Let $\cZ$ be a Banach space that has a complemented subspace which is SUFDQ$_k$. Then $\cB(\cZ)$ contains Fredholm operators of index $nk$ for all $n\in\BZ$. In particular, if $\cZ$ contains a complemented copy of a primary Banach space, then $\cB(\cZ)$ contains Fredholm operators of all indices.
\end{corollary}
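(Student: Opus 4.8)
The plan is to show that the hypothesis forces $\cZ$ itself to be SUFDQ$_k$, after which the desired Fredholm operators are produced directly by the machinery already assembled. First I would write $\cZ\simeq\cC\oplus\cW$, where $\cW$ is the given complemented subspace that is SUFDQ$_k$ and $\cC$ is a complement of $\cW$ in $\cZ$. Since $\cW$ is SUFDQ$_k$, Lemma \ref{L:BSPsum} applies with $\cC$ in the role of the arbitrary Banach space and yields that $\cC\oplus\cW$, hence $\cZ$, is SUFDQ$_k$. The property SUFDQ$_k$ is manifestly invariant under isomorphism, so passing from $\cC\oplus\cW$ to $\cZ$ is harmless.

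Next I would invoke the remark following the definition of SUFDQ$_k$: since $\cZ$ is SUFDQ$_k$, it is SUFDQ$_{nk}$ for every integer $n>0$. Applying Proposition \ref{P:BanPropEquiv} with $nk$ in place of $k$, the equivalences (i)$\Leftrightarrow$(iii) and (i)$\Leftrightarrow$(iv) guarantee that $\cB(\cZ)$ contains Fredholm operators of index $nk$ and of index $-nk$ for each $n>0$. Together with the identity operator $I_\cZ$, which is Fredholm of index $0$, this exhibits a Fredholm operator of index $nk$ for every $n\in\BZ$, proving the first assertion. Alternatively, one may obtain the negative indices from the positive ones by Corollary \ref{C:BSPkTo-k}. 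Note that this route, via SUFDQ$_{nk}$, is self-contained within the paper and avoids appealing to the additivity of the Fredholm index under composition.

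Finally, for the ``in particular'' statement, recall from the discussion preceding Proposition \ref{P:BanPropEquiv} that every primary Banach space is SUFDQ, i.e.\ SUFDQ$_1$. Thus a complemented copy of a primary space inside $\cZ$ is a complemented SUFDQ$_1$ subspace, and the first part of the corollary with $k=1$ produces Fredholm operators of index $n$ for all $n\in\BZ$; that is, $\cB(\cZ)$ contains Fredholm operators of all indices.

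I do not expect a substantive obstacle here: the essential content is already isolated in Lemma \ref{L:BSPsum}, the stability remark, and Proposition \ref{P:BanPropEquiv}, and the proof is mainly a matter of chaining these correctly. The only point requiring a moment's care is the reduction in the first step, namely confirming that the presence of the (arbitrary) complement $\cC$ does not destroy the SUFDQ$_k$ property, which is exactly what Lemma \ref{L:BSPsum} provides.
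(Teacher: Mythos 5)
Your proof is correct and follows exactly the route the paper intends for this corollary (which is stated without an explicit proof): Lemma \ref{L:BSPsum} to transfer SUFDQ$_k$ from the complemented subspace to $\cZ$, the remark that SUFDQ$_k$ implies SUFDQ$_{nk}$, and Proposition \ref{P:BanPropEquiv} (with Corollary \ref{C:BSPkTo-k} or part (iv) for the negative indices). The ``in particular'' clause is likewise handled as the paper does, via the observation that primary spaces are SUFDQ$_1$.
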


\begin{remark}\label{R:BSexamples}
In \cite{GM97}, W.T. Gowers and B. Maurey constructed a Banach space $\cZ$ which is isomorphic to its subspaces of even codimension, but not to those of odd codimension. In other words, $\cZ$ is SUFDQ$_2$ but not SUFDQ, so that on $\cZ$ Fredholm operators exist, but only of even index.

Furthermore, there also exist Banach spaces where all Fredholm operators have index 0, i.e., so that there is no $k>0$ so that the Banach spaces is SUFDQ$_k$. Examples of such spaces are Banach spaces with few operators and very few operators. A Banach space $\cZ$ has {\em few operators} if all operators in $\cB(\cZ)$ are of the form $\la I_\cZ +S$ with $\la\in\BC$ and $S$ strictly singular, and $\cZ$ has {\em very few operators} if all operators in $\cB(\cZ)$ are of the form $\la I_\cZ +K$ with $\la\in\BC$ and $K$ compact. All hereditary indecomposable Banach spaces have few operators \cite{GM93}, and example of a Banach space with very few operators was first constructed by Argyros and Haydon \cite{AH11}.
\end{remark}

\section{Analysis of Problem \ref{P:BSOP}}\label{S:AnalyseBSOP}

In this section we analyse Problem \ref{P:BSOP}. Throughout, let $\cV$, $\cW$, $\cZ_1$ and $\cZ_2$ be Banach spaces satisfying \eqref{BanProps}. We begin with some general results and a few corollaries, after which we consider some special cases in Subsections \ref{SubS:Z1Z2findim} and \ref{SubS:EssInc}.

\begin{lemma}\label{L:Z1Z2iso}
Assume $\cZ_1\simeq \cZ_2$. Then Problem \ref{P:BSOP} is solvable.
\end{lemma}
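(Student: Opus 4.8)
The plan is to solve Problem \ref{P:BSOP} by the most direct route available: since all five operators in \eqref{Ops} are at our disposal, I would switch off every coupling term and let an isomorphism on the diagonal carry the invertibility. Because $\cZ_1 \simeq \cZ_2$ by hypothesis, I would first fix an isomorphism $\Phi \in \cB(\cZ_1, \cZ_2)$ and then choose
\[
A_{12} = 0, \quad A_{21} = 0, \quad A_{22} = \Phi, \quad B_1 = 0, \quad B_2 = 0,
\]
each of which lies in the operator space prescribed for it in \eqref{Ops}.

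With this choice one has $I_\cV - B_1 B_2 = I_\cV$, so the operator $T$ in \eqref{T} reduces to the block-diagonal operator $\mat{cc}{I_\cV & 0 \\ 0 & \Phi} : \mat{c}{\cV \\ \cZ_1} \to \mat{c}{\cV \\ \cZ_2}$, whose inverse is visibly $\mat{cc}{I_\cV & 0 \\ 0 & \Phi^{-1}}$. Hence $T$ is invertible and the associated Problem \ref{P:BSOP} is solvable.

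I do not expect any genuine obstacle here: the entire content of the lemma is the observation that once $\cZ_1$ and $\cZ_2$ are already isomorphic, one need not engage the perturbation $B_1 B_2$ of $I_\cV$ nor the off-diagonal corners $A_{12}, A_{21}$, and a diagonal $T$ suffices. It is perhaps worth remarking that the ambient hypotheses \eqref{BanProps} are automatically met under the assumption $\cZ_1 \simeq \cZ_2$, so no separate compatibility verification is required.
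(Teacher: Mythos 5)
Your proposal is correct and is exactly the argument the paper gives: set $A_{12}$, $A_{21}$, $B_1$, $B_2$ to zero and take $A_{22}$ to be an isomorphism of $\cZ_1$ onto $\cZ_2$, so that $T$ is block-diagonal and hence invertible. Nothing further is needed.
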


\begin{proof}[\bf Proof]
Taking $A_{22}\in\cB(\cZ_1,\cZ_2)$ invertible and setting $A_{12}$, $A_{21}$, $B_1$ and $B_2$ equal to zero-operators yields $T$ in \eqref{T} invertible.
\end{proof}

\begin{corollary}\label{C:VorWfindim}
Assume $\dim(\cV)<\infty$ or $\dim(\cW)<\infty$. Then $\cZ_1\simeq \cZ_2$ and hence Problem \ref{P:BSOP} is solvable.
\end{corollary}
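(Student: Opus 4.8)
The plan is to reduce the entire statement to Lemma \ref{L:Z1Z2iso}. That lemma already guarantees that Problem \ref{P:BSOP} is solvable as soon as $\cZ_1\simeq\cZ_2$, so all that needs to be shown is that the finite-dimensionality hypothesis, together with \eqref{BanProps}, forces $\cZ_1\simeq\cZ_2$. Thus the whole content of the corollary is a cancellation statement for a finite-dimensional direct summand.

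By symmetry it suffices to treat the case $\dim(\cV)<\infty$, the case $\dim(\cW)<\infty$ being identical after replacing the first isomorphism in \eqref{BanProps} by the second. Writing $\cF=\cV$, the first relation in \eqref{BanProps} becomes $\cF\oplus\cZ_1\simeq\cF\oplus\cZ_2$ with $\dim(\cF)<\infty$. I would then cancel the finite-dimensional summand $\cF$ to obtain $\cZ_1\simeq\cZ_2$; this is exactly \cite[Lemma 3.4]{tHMRR19}, the same cancellation result already used in the proof of Proposition \ref{P:BanPropEquiv}. An application of Lemma \ref{L:Z1Z2iso} then completes the argument.

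For a self-contained derivation of the cancellation one can reuse the finite-rank perturbation idea that recurs throughout Section \ref{S:BSP}. Fix an isomorphism $T:\cF\oplus\cZ_1\to\cF\oplus\cZ_2$ and decompose it as $T=\sbm{T_{11}&T_{12}\\T_{21}&T_{22}}$ with $T_{22}\in\cB(\cZ_1,\cZ_2)$. The off-diagonal blocks $T_{12}$ and $T_{21}$ factor through $\cF$ and are therefore finite rank, so $\sbm{T_{11}&0\\0&T_{22}}$ is a finite-rank perturbation of the invertible $T$ and hence Fredholm of index $0$. As $T_{11}$ acts on the finite-dimensional space $\cF$ it has index $0$, so $T_{22}$ is Fredholm of index $0$. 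Splitting off the finite-dimensional kernel and cokernel gives $\cZ_1\simeq\ran T_{22}\oplus\kr T_{22}$ and $\cZ_2\simeq\ran T_{22}\oplus\cokr T_{22}$ with $\dim(\kr T_{22})=\dim(\cokr T_{22})<\infty$, and since these two finite-dimensional complements are isomorphic one obtains $\cZ_1\simeq\cZ_2$.

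The only delicate point, and the sole obstacle, is the cancellation of the finite-dimensional summand: finite-dimensionality is essential, since cancellation of infinite-dimensional summands fails in general. This failure is precisely what makes the remaining cases of Problem \ref{P:BSOP} genuinely difficult, whereas here it collapses the problem into a routine appeal to Lemma \ref{L:Z1Z2iso}.
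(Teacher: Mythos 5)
Your proof is correct and follows the same route as the paper: cancel the finite-dimensional summand via \cite[Lemma 3.4]{tHMRR19} to get $\cZ_1\simeq\cZ_2$, then invoke Lemma \ref{L:Z1Z2iso}. The additional self-contained finite-rank-perturbation derivation of the cancellation is also sound, but it is not needed beyond the citation.
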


\begin{proof}[\bf Proof]
Apply Lemma 3.4 of \cite{tHMRR19} to the first isomorphism in \eqref{BanProps} in case $\dim(\cV)<\infty$ and to the second isomorphism in \eqref{BanProps} in case $\dim(\cW)<\infty$.
\end{proof}

As indicated, by assumption invertible operators from $\cV\oplus\cZ_1$ to $\cV\oplus\cZ_2$ exist, the challenge is to find one with the left upper corner as in \eqref{T}. Hence we are interested in the  question which operators are contained in the set
\[
\fK:=\{I_\cV-B_1B_2\colon B_1\in\cB(\cW,\cV),\, B_2\in\cB(\cV,\cW)\}.
\]
Note that the set $\fK$ only depends on $\cV$ and $\cW$. Apart from zero operators, we can take $B_1$ and $B_2$ to be finite rank operators, in which case $I-B_1B_2$ is Fredholm with index 0.

\begin{lemma}\label{L:FredK}
Assume $\cV$ and $\cW$ are infinite dimensional. Then for any Fredholm operator $F$ in $\cB(\cV)$ with index 0 there exists an invertible operator $G\in\cB(\cV)$ so that $GF\in\fK$.
\end{lemma}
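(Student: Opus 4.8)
The plan is to reduce the statement to two elementary facts: that an index $0$ Fredholm operator becomes the identity minus a finite rank operator after multiplication by a suitable invertible operator, and that every finite rank operator on $\cV$ factors through the infinite dimensional space $\cW$. Granting these, if $G$ is invertible with $GF = I_\cV - P$ for some finite rank $P$, and if $P = B_1 B_2$ with $B_2 \in \cB(\cV,\cW)$ and $B_1\in\cB(\cW,\cV)$, then $GF = I_\cV - B_1 B_2 \in \fK$, as required.

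For the first fact, I would use the Fredholm structure of $F$ directly. Since $\ind(F)=0$, the kernel and cokernel share a finite dimension $n$; I would choose a domain decomposition $\cV = \kr F \oplus \cV_1$ and a codomain decomposition $\cV = \cV_2 \oplus \ran F$ with $\dim\cV_2 = n$, so that $F = \sbm{0&0\\0&F_0}$ where $F_0\colon\cV_1\to\ran F$ is an isomorphism (by the open mapping theorem). Fixing any isomorphism $J\colon\kr F\to\cV_2$, the operator $G_0 = \sbm{J&0\\0&F_0}$ is invertible, and a direct computation gives $G:=G_0^{-1}\in\cB(\cV)$ with $GF = \sbm{0&0\\0&I_{\cV_1}} = I_\cV - P$, where $P$ is the (rank $n$) projection of $\cV$ onto $\kr F$ along $\cV_1$.

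For the second fact, I would factor the finite rank operator $P$ through a finite dimensional space, say $P = R_1 R_2$ with $R_2\colon\cV\to\BC^n$ and $R_1\colon\BC^n\to\cV$. Because $\cW$ is infinite dimensional it contains an $n$-dimensional subspace, and every finite dimensional subspace of a Banach space is complemented; hence there are bounded maps $\iota\colon\BC^n\to\cW$ and $\pi\colon\cW\to\BC^n$ with $\pi\iota = I_{\BC^n}$. Setting $B_2 := \iota R_2\in\cB(\cV,\cW)$ and $B_1 := R_1\pi\in\cB(\cW,\cV)$ yields $B_1 B_2 = R_1\pi\iota R_2 = R_1 R_2 = P$, and therefore $GF = I_\cV - B_1 B_2 \in \fK$.

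Neither ingredient poses a genuine analytic difficulty, so I do not expect a serious obstacle; the only place the hypotheses are truly used is the infinite dimensionality of $\cW$, which is exactly what provides the room to embed $\BC^n$ complementably and thus to express the finite rank correction $P$ as a product $B_1 B_2$ with the prescribed intermediate space. The infinite dimensionality of $\cV$ plays no role in the argument beyond the standing conventions of the section, since even a finite dimensional $\cV$ would give an invertible $F$ and the trivial choice $B_1=B_2=0$.
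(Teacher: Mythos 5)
Your proof is correct and follows essentially the same route as the paper: reduce $F$ to $I_\cV$ minus a finite rank operator by multiplying with an invertible operator, then factor the finite rank operator through $\cW$ via a complemented copy of $\BC^n$; the only difference is that the paper obtains the first step by citing Theorem XI.5.3 of Gohberg--Goldberg--Kaashoek ($F=H-K$ with $H$ invertible and $K$ finite rank, so $G=H^{-1}$), whereas you construct $G$ explicitly from the kernel/range decomposition. One small inaccuracy in your closing aside: a finite dimensional $\cV$ would not force $F$ to be invertible (every operator on a finite dimensional space is Fredholm of index $0$, e.g.\ the zero operator), although your main argument would still apply verbatim in that case.
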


\begin{proof}[\bf Proof]
Since $\cV$ and $\cW$ are infinite dimensional all finite rank operators in $\cB(\cV)$ can be factored as $B_1B_2$ with $B_1\in\cB(\cW,\cV)$ and $B_2\in\cB(\cV,\cW)$. Let $F$ in $\cB(\cV)$ be Fredholm with index 0. By Theorem XI.5.3 in \cite{GGK90}, we have $F=H-K$ for operators $H,K\in\cB(\cV)$ with $H$ invertible and $K$ finite rank. Thus $H^{-1}F=I_\cV - H^{-1}K$. Since $H^{-1}K$ is also finite rank, it follows that $H^{-1}F\in\fK$. Hence we can take $G=H^{-1}$.
\end{proof}

It can happen that $\fK$ contains only Fredholm operators of index 0.

\begin{lemma}\label{L:fKonlyFredholm}
The set $\fK$ contains only Fredholm operators (of index 0) if and only if $\cV$ and $\cW$ are essentially incomparable Banach spaces.
\end{lemma}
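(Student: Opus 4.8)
The plan is to prove both implications directly from the definitions of \emph{inessential operator} and \emph{essentially incomparable} spaces recalled in the introduction, the key point being that once the quantifiers are aligned, the two conditions in the lemma are reformulations of one another. First I would unwind the definition of $\fK$: a generic element has the form $I_\cV-B_1B_2$ with $B_1\in\cB(\cW,\cV)$ and $B_2\in\cB(\cV,\cW)$. For a fixed $B_2\in\cB(\cV,\cW)$, the definition from the introduction (with $\cG=\cV$, $\cH=\cW$, $T=B_2$, $S=B_1$) says that $B_2$ is inessential precisely when $I_\cV-B_1B_2$ is Fredholm for every $B_1\in\cB(\cW,\cV)$. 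Thus $B_2$ is inessential if and only if $I_\cV-B_1B_2\in\fK$ is Fredholm for all choices of $B_1$.

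For the implication starting from essential incomparability, I would argue as follows. If $\cV$ and $\cW$ are essentially incomparable, then by definition every $B_2\in\cB(\cV,\cW)$ is inessential, so by the observation above $I_\cV-B_1B_2$ is Fredholm for every pair $B_1,B_2$; moreover, as noted in the introduction, an inessential perturbation of the identity is automatically Fredholm of index $0$. Hence $\fK$ consists only of index-$0$ Fredholm operators.

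For the converse, suppose $\fK$ contains only Fredholm operators and fix an arbitrary $B_2\in\cB(\cV,\cW)$. Then $I_\cV-B_1B_2$ is Fredholm for every $B_1\in\cB(\cW,\cV)$, which by the reformulation above is exactly the assertion that $B_2$ is inessential. As $B_2$ was arbitrary, all operators in $\cB(\cV,\cW)$ are inessential, i.e., $\cV$ and $\cW$ are essentially incomparable.

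Honestly, there is no deep obstacle here: the entire content lies in matching the quantifier ``$I_\cV-ST$ Fredholm for all $S$'' from the definition of inessentiality with the defining structure of $\fK$. The only point requiring a moment's care is the parenthetical ``(of index $0$)'': one must invoke the fact, stated in the introduction, that inessential perturbations of the identity are automatically Fredholm of index $0$, so that ``$\fK$ contains only Fredholm operators'' and ``$\fK$ contains only index-$0$ Fredholm operators'' describe the same situation.
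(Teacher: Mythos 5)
Your proof is correct and is exactly the argument the paper has in mind: the paper's proof simply states that the claim is ``directly clear from the definition of essentially incomparable,'' and your careful matching of the quantifier in the definition of inessentiality with the structure of $\fK$, together with the index-$0$ observation from the introduction, is precisely the unwinding of that remark.
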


\begin{proof}[\bf Proof]
This is directly clear from the definition of essentially incomparable.
\end{proof}

In Lemma \ref{L:fKonlyFredholm}, the parenthesised phrase can be included or removed without changing the validity of the statement.

In case $\fK=\cB(\cV)$, Problem \ref{P:BSOP} is solvable for any $\cZ_1$ and $\cZ_2$ so that \eqref{BanProps} hold. This case can easily be characterized.

\begin{lemma}\label{L:K=B(V)}
It holds that $\fK=\cB(\cV)$ if and only if $\cV$ is isomorphic to a complemented subspace of $\cW$. Furthermore, $\fK=\cB(\cV)$ holds if and only if $0\in\fK$.
\end{lemma}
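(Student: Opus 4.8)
The plan is to route both stated equivalences through the single condition $0\in\fK$, which says precisely that the identity $I_\cV$ factors as $B_1B_2$ through $\cW$ for some $B_1\in\cB(\cW,\cV)$ and $B_2\in\cB(\cV,\cW)$. I would first establish $\fK=\cB(\cV)\Leftrightarrow 0\in\fK$, then $0\in\fK\Leftrightarrow\cV$ is isomorphic to a complemented subspace of $\cW$, and finally combine the two to read off the first assertion.

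For the equivalence $\fK=\cB(\cV)\Leftrightarrow 0\in\fK$, the forward direction is immediate since $0\in\cB(\cV)$. For the reverse I would use a factorization trick: assuming $B_1B_2=I_\cV$, any $C\in\cB(\cV)$ lies in $\fK$ because $I_\cV-C$ factors through $\cW$ via $B_1\bigl(B_2(I_\cV-C)\bigr)=(B_1B_2)(I_\cV-C)=I_\cV-C$. Setting $B_1'=B_1$ and $B_2'=B_2(I_\cV-C)$ then gives $I_\cV-B_1'B_2'=C$, so $\fK=\cB(\cV)$. Next, for $0\in\fK\Leftrightarrow\cV$ isomorphic to a complemented subspace of $\cW$, suppose $B_1B_2=I_\cV$ and set $P:=B_2B_1\in\cB(\cW)$. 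Then $P$ is idempotent since $P^2=B_2(B_1B_2)B_1=B_2B_1=P$, and its range is $B_2\cV$; hence $\cW=\ran P\oplus\kr P$ displays $B_2\cV$ as complemented, while $B_2$ restricts to an isomorphism of $\cV$ onto $B_2\cV$ with inverse $B_1|_{B_2\cV}$. Conversely, if $S\colon\cV\to\cM$ is an isomorphism onto a complemented subspace $\cM\subseteq\cW$ with bounded projection $P\in\cB(\cW)$ of range $\cM$, then $B_2:=S$ (viewed into $\cW$) and $B_1:=S^{-1}P\in\cB(\cW,\cV)$ satisfy $B_1B_2=S^{-1}(P|_\cM)S=S^{-1}S=I_\cV$, so $0\in\fK$. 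Combining gives $\fK=\cB(\cV)\Leftrightarrow\cV$ isomorphic to a complemented subspace of $\cW$ (the first assertion) and $\fK=\cB(\cV)\Leftrightarrow 0\in\fK$ (the second).

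None of these steps is a genuine obstacle; the only point requiring a little care is the identification $\ran P=B_2\cV$ in the idempotent argument. This follows from the two inclusions $\ran P=B_2B_1\cW\subseteq B_2\cV$, and, for every $v\in\cV$, $B_2v=B_2(B_1B_2)v=P(B_2v)\in\ran P$, which gives $B_2\cV\subseteq\ran P$.
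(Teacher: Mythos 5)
Your proposal is correct and follows essentially the same route as the paper: both reduce everything to the condition $0\in\fK$ (i.e., $I_\cV$ factoring through $\cW$), identify this with $\cV$ embedding as a complemented subspace of $\cW$ via the idempotent $B_2B_1$ (the paper phrases this as the existence of a left invertible operator $\cV\to\cW$), and then absorb an arbitrary $C\in\cB(\cV)$ by perturbing one factor of the factorization of $I_\cV$ — you modify $B_2$ where the paper modifies $B_1$, which is an immaterial difference.
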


\begin{proof}[\bf Proof]
Note that the condition $0\in\fK$ is equivalent to the existence of a left invertible operator from $\cV$ into $\cW$, whose range provides a complemented subspace of $\cW$ which is isomorphic to $\cV$. Conversely, if $\cV_0\subset \cW$ is isomorphic to $\cV$ and complemented in $\cW$, say with complement $\cW_0$ and isomorphism $R_0\in\cB(\cV,\cV_0)$, then $R=\sbm{R_0\\ 0}\in\cB(\cV,\cV_0\oplus\cW_0)=\cB(\cV,\cW)$ is left invertible, so that $0\in\fK$.

Now assume $R\in\cB(\cV,\cW)$ is left invertible, with left inverse $R^+\in\cB(\cW,\cV)$. Let $X\in\cB(\cV)$ be arbitrary. Then take $B_1=(I_\cV-X)R^+$ and $B_2=R$, and it follows that $X=I-B_1B_2\in\fK$. Hence $\fK=\cB(\cV)$. Conversely, it is clear that $\fK=\cB(\cV)$ implies that $0\in\cB(\cV)$.
\end{proof}

\begin{corollary}\label{C:IsoTocompSubs}
Assume $\cV$ is isomorphic to a complemented subspace of $\cW$. Then Problem \ref{P:BSOP} is solvable.
\end{corollary}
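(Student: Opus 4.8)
The plan is to reduce the statement directly to Lemma \ref{L:K=B(V)}, which already tells us that the hypothesis ``$\cV$ is isomorphic to a complemented subspace of $\cW$'' is precisely equivalent to $\fK=\cB(\cV)$. The only remaining ingredient is the standing assumption \eqref{BanProps}, which guarantees the existence of \emph{some} invertible operator from $\cV\oplus\cZ_1$ to $\cV\oplus\cZ_2$; the task is merely to massage it into the shape \eqref{T}.

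First I would invoke the first isomorphism in \eqref{BanProps} to obtain an invertible operator
\[
S=\mat{cc}{S_{11} & S_{12}\\ S_{21} & S_{22}}:\mat{c}{\cV\\ \cZ_1}\to\mat{c}{\cV\\ \cZ_2},
\]
written in block form with respect to the displayed decompositions, so that in particular $S_{11}\in\cB(\cV)$, $S_{12}\in\cB(\cZ_1,\cV)$, $S_{21}\in\cB(\cV,\cZ_2)$ and $S_{22}\in\cB(\cZ_1,\cZ_2)$. Next I would apply Lemma \ref{L:K=B(V)}: since $\cV$ is isomorphic to a complemented subspace of $\cW$, we have $\fK=\cB(\cV)$, and hence the entry $S_{11}$ lies in $\fK$. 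By the definition of $\fK$ this means there exist $B_1\in\cB(\cW,\cV)$ and $B_2\in\cB(\cV,\cW)$ with $S_{11}=I_\cV-B_1B_2$.

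To finish, I would set $A_{12}=S_{12}$, $A_{21}=S_{21}$ and $A_{22}=S_{22}$, so that the resulting operator $T$ in \eqref{T} coincides with $S$ and is therefore invertible; all five operators $A_{12},A_{21},A_{22},B_1,B_2$ then have the types required in \eqref{Ops}. Thus Problem \ref{P:BSOP} is solvable. There is essentially no analytic obstacle here: the content is entirely contained in the equivalence $\fK=\cB(\cV)$ established in Lemma \ref{L:K=B(V)}, and the present argument simply observes that once every operator in $\cB(\cV)$ is admissible as the $(1,1)$-entry, one may copy the $(1,1)$-entry of an arbitrary witnessing isomorphism $S$ coming from \eqref{BanProps}, leaving the other three blocks untouched.
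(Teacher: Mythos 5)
Your proof is correct and follows essentially the same route as the paper: the corollary is stated there as an immediate consequence of Lemma \ref{L:K=B(V)} together with the observation (made just before that lemma, and used again in the proof of Corollary \ref{C:V=W,Z1Z2findim}) that once $\fK=\cB(\cV)$ one may take for $T$ any invertible operator from $\cV\oplus\cZ_1$ to $\cV\oplus\cZ_2$ supplied by \eqref{BanProps}. Your write-up just makes the block decomposition of that witnessing isomorphism explicit, which is a harmless elaboration rather than a different argument.
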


\begin{corollary}\label{C:V=W,Z1Z2findim}
Assume $\cV\oplus \cZ_1\simeq \cW\oplus \cZ_1$ or $\cV\oplus \cZ_2\simeq \cW\oplus \cZ_2$ (note that one implies the other via \eqref{BanProps}). Then Problem \eqref{P:BSOP} is solvable in case $\cZ_1$ or $\cZ_2$ is finite dimensional.
\end{corollary}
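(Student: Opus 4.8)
The plan is to reduce everything to the situation $\cV \simeq \cW$ and then invoke Corollary \ref{C:IsoTocompSubs}, rather than attempting to construct a solution of Problem \ref{P:BSOP} by hand.

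First I would record that the two displayed hypotheses are genuinely interchangeable under \eqref{BanProps}, as the statement already anticipates. Chaining the isomorphisms
\[
\cV \oplus \cZ_2 \simeq \cV \oplus \cZ_1 \simeq \cW \oplus \cZ_1 \simeq \cW \oplus \cZ_2,
\]
where the outer two isomorphisms come from \eqref{BanProps} and the middle one from the assumption $\cV \oplus \cZ_1 \simeq \cW \oplus \cZ_1$, shows that $\cV \oplus \cZ_2 \simeq \cW \oplus \cZ_2$; the reverse implication is entirely symmetric. Hence, under the hypothesis of the corollary, \emph{both} isomorphisms $\cV \oplus \cZ_1 \simeq \cW \oplus \cZ_1$ and $\cV \oplus \cZ_2 \simeq \cW \oplus \cZ_2$ are at my disposal, and I am free to use whichever is convenient.

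Next I would split according to which of $\cZ_1$, $\cZ_2$ is finite dimensional. If $\cZ_1$ is finite dimensional, I apply the cancellation property for finite dimensional summands (Lemma 3.4 of \cite{tHMRR19}, exactly as it was used in the proof of Corollary \ref{C:VorWfindim}) to $\cV \oplus \cZ_1 \simeq \cW \oplus \cZ_1$, cancelling $\cZ_1$ to conclude $\cV \simeq \cW$. If instead $\cZ_2$ is finite dimensional, I apply the same cancellation to the isomorphism $\cV \oplus \cZ_2 \simeq \cW \oplus \cZ_2$ secured in the previous step, again obtaining $\cV \simeq \cW$. Either way I arrive at $\cV \simeq \cW$. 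Finally, since $\cW$ is trivially a complemented subspace of itself and $\cV \simeq \cW$, the space $\cV$ is isomorphic to a complemented subspace of $\cW$, so Corollary \ref{C:IsoTocompSubs} immediately yields the solvability of Problem \ref{P:BSOP}.

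There is no serious obstacle here; the content lies entirely in correctly chaining the hypotheses and in having the finite dimensional cancellation lemma available to strip off the summand. The one point requiring a little care is to apply the cancellation to the \emph{right} isomorphism, namely the one whose cancelled summand ($\cZ_1$ or $\cZ_2$) is the one assumed finite dimensional, which is precisely why establishing the equivalence of the two hypotheses up front is worthwhile.
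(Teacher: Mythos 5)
Your proposal is correct and follows essentially the same route as the paper: cancel the finite dimensional summand (Lemma 3.4 of \cite{tHMRR19}) from whichever isomorphism carries it to get $\cV\simeq\cW$, and then conclude via the fact that $\fK=\cB(\cV)$ when $\cV$ embeds complementably in $\cW$ (the paper invokes Lemma \ref{L:K=B(V)} directly where you invoke its packaged form, Corollary \ref{C:IsoTocompSubs}). Your explicit case split on which of $\cZ_1$, $\cZ_2$ is finite dimensional simply spells out the paper's ``without loss of generality.''
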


\begin{proof}[\bf Proof]
Without loss of generality we may assume $\dim (\cZ_1)<\infty$. Then Lemma 3.4 in \cite{tHMRR19} implies $\cV\simeq\cW$. By Lemma \ref{L:K=B(V)} we then find that $\fK=\cB(\cV)$, so that for $T$ we can take any invertible operator from $\cV\oplus \cZ_1$ to $\cV\oplus \cZ_2$.
\end{proof}

\subsection{The case where $\cZ_1$ and $\cZ_2$ are finite dimensional}\label{SubS:Z1Z2findim}

We start with a characterization of \eqref{BanProps} for the case that $\cZ_1$ and $\cZ_2$ are finite dimensional.

\begin{lemma}\label{L:BanPropChar-findim}
Assume $\cZ_1$ and $\cZ_2$ are finite dimensional. Then \eqref{BanProps} holds if and only if $\cV$ and $\cW$ are SUFDQ$_k$ for $k:=|\dim(\cZ_2)-\dim(\cZ_1)|$ or $k=0$.
\end{lemma}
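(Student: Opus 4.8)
The plan is to turn condition \eqref{BanProps} into a statement about SUFDS$_k$ (equivalently SUFDQ$_k$, by Proposition \ref{P:BanPropEquiv}) and then to transfer that property across the finite-dimensional summands. Write $d_i=\dim(\cZ_i)$, and recall that any two finite-dimensional Banach spaces of equal dimension are isomorphic, so $\cZ_i\simeq\BC^{d_i}$. If $d_1=d_2$ then $k=0$, $\cZ_1\simeq\cZ_2$, and both sides of the asserted equivalence hold trivially (the right-hand side because of the ``or $k=0$'' clause). I would therefore assume $d_1\neq d_2$ and, by symmetry, $d_2>d_1$, so that $k=d_2-d_1>0$ and $\BC^{d_2}\simeq\BC^{d_1}\oplus\BC^{k}$.

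Under this assumption the first isomorphism in \eqref{BanProps} reads $\cV\oplus\BC^{d_1}\simeq\cV\oplus\BC^{d_1}\oplus\BC^{k}$, which is exactly the statement that $\cV\oplus\BC^{d_1}$ is SUFDS$_k$ (it suffices to test the defining isomorphism against $\BC^{k}$, since every $k$-dimensional space is isomorphic to it). Likewise the second isomorphism says that $\cW\oplus\BC^{d_1}$ is SUFDS$_k$. By Proposition \ref{P:BanPropEquiv} these are equivalent to $\cV\oplus\BC^{d_1}$ and $\cW\oplus\BC^{d_1}$ being SUFDQ$_k$. Thus \eqref{BanProps} is equivalent to the conjunction ``$\cV\oplus\BC^{d_1}$ is SUFDQ$_k$ and $\cW\oplus\BC^{d_1}$ is SUFDQ$_k$''.

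It then remains to show, for a Banach space $\cU$ (applied to $\cU=\cV$ and $\cU=\cW$), that $\cU\oplus\BC^{d_1}$ is SUFDQ$_k$ if and only if $\cU$ is SUFDQ$_k$. The forward implication is supplied by Lemma \ref{L:BSPfindimcomp}: viewing $\BC^{d_1}$ as a finite-dimensional subspace of $\cU\oplus\BC^{d_1}$, its quotient is isomorphic to $\cU$, so $\cU$ inherits SUFDQ$_k$. The reverse implication is immediate from Lemma \ref{L:BSPsum}. Combining the two equivalences of the previous paragraph with this transfer applied to $\cV$ and $\cW$ yields the claim.

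The one point requiring care---what I would flag as the main obstacle---is that the isomorphisms in \eqref{BanProps} only directly exhibit $\cV\oplus\BC^{d_1}$ (not $\cV$ itself) as SUFDS$_k$; the passage between $\cV$ and $\cV\oplus\BC^{d_1}$ is precisely what Lemmas \ref{L:BSPsum} and \ref{L:BSPfindimcomp} are designed to handle, so once those are invoked the remainder is routine bookkeeping with dimensions. As a consistency check, this automatically accommodates the degenerate situation where $\cV$ or $\cW$ is finite dimensional: such a space is not SUFDQ$_k$ for $k>0$, and correspondingly the relevant isomorphism in \eqref{BanProps} forces $d_1=d_2$, so that $k>0$ never arises.
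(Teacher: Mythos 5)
Your proof is correct, but it takes a genuinely different route from the paper's. The paper argues directly with block operator matrices: for the forward direction it takes an invertible $S\in\cB(\cV\oplus\cZ_1,\cV\oplus\cZ_2)$, subtracts the finite-rank part $\sbm{0&S_{12}\\ S_{21}&S_{22}}$, and reads off from the resulting index count that $S_{11}\in\cB(\cV)$ is Fredholm of index $\dim(\cZ_2)-\dim(\cZ_1)$, invoking Proposition \ref{P:BanPropEquiv} only at the end; for the converse it explicitly assembles an invertible $S$ from a surjective (or injective) Fredholm operator of index $\pm k$. You instead replace $\cZ_1,\cZ_2$ by $\BC^{d_1}$ and $\BC^{d_1}\oplus\BC^{k}$, observe that each isomorphism in \eqref{BanProps} is then verbatim the statement that $\cV\oplus\BC^{d_1}$ (respectively $\cW\oplus\BC^{d_1}$) is SUFDS$_k$, and shuttle the SUFDQ$_k$ property between $\cV\oplus\BC^{d_1}$ and $\cV$ via Lemmas \ref{L:BSPsum} and \ref{L:BSPfindimcomp}. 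Both arguments are sound, and your reduction is legitimate: the WLOG $d_2>d_1$ is harmless since \eqref{BanProps} and $k$ are symmetric in $\cZ_1,\cZ_2$, and the finite-rank perturbation the paper performs inline is exactly the computation hidden inside Lemma \ref{L:BSPfindimcomp}. What your version buys is modularity --- the index bookkeeping is outsourced to Section \ref{S:BSP} and no explicit operator construction is needed for the converse; what the paper's version buys is that it does not need the intermediate passage through $\cV\oplus\BC^{d_1}$ and produces the Fredholm operator on $\cV$ (and the invertible $S$) concretely, which is in the spirit of how the same decomposition is reused in the proof of Proposition \ref{P:BSOPchar-findimZs}.
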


\begin{proof}[\bf Proof] In case $k=0$ we have $\cZ_1\simeq \cZ_2$ and hence \eqref{BanProps} holds. Thus it remains to consider the case $k>0$.
We prove the claim for $\cV$ using the first isomorphism in \eqref{BanProps}; the claim for $\cW$ follows by an analogous argument. Assume \eqref{BanProps} holds and let $S\in\cB(\cV\oplus\cZ_1,\cV\oplus\cZ_2)$ be invertible. Decompose $S$ as
\begin{equation}\label{Sdec}
S=\mat{cc}{S_{11}&S_{12}\\ S_{21}&S_{22}}:\mat{c}{\cV\\ \cZ_1} \to \mat{c}{\cV\\ \cZ_2}.
\end{equation}
Since $S$ is invertible, $S$ is Fredholm with index 0. Write
\[
S=\wtil{S} + R \quad\mbox{with}\quad \wtilS=\mat{cc}{S_{11}&0\\ 0&0},\ R=\mat{cc}{0&S_{12}\\ S_{21}&S_{22}}.
\]
Since $R$ is finite rank, also $\wtil{S}$ is Fredholm with index 0. Therefore the dimension of $\kr S_{11}$ is equal to $\dim (\kr \wtil{S})-\dim (\cZ_1)$ and the dimension of $\cokr S_{11}$ is equal to $\dim (\cokr \wtil{S})-\dim (\cZ_2)$. From this we conclude that $S_{11}\in\cB(\cV)$ is Fredholm with index $\dim (\cZ_2) - \dim (\cZ_1)$. Hence $\cV$ is SUFDQ$_k$ by Proposition \ref{P:BanPropEquiv}.

Conversely, assume $\cV$ is SUFDQ$_k$. We have to consider two cases. First assume $\dim(\cZ_2)\geq \dim(\cZ_1)$, so that $k=\dim(\cZ_2)-\dim(\cZ_1)$. Let $F\in\cB(\cV)$ be a Fredholm operator of index $k$. Without loss of generality we may assume $F$ is surjective. To obtain an invertible operator $S$ as in \eqref{Sdec}, set $S_{11}=F$, $S_{12}=0$, let $S_{22}$ be any linear injection from $\cZ_1$ into $\cZ_2$. Then $\kr F$ and $\cokr S_{22}$ both have dimension $k$. Finally, take for $S_{21}$ an operator in $\cB(\cV,\cZ_2)$ with kernel equal to a complement of $\kr F$ while $S_{21}$ maps $\kr F$ isomorpically onto $\cokr S_{22}$. It is easy to see that $S$ constructed in this way is invertible. Hence $\cV\oplus\cZ_1\simeq\cV\oplus\cZ_2$. In case $\dim(\cZ_2)<\dim(\cZ_1)$, a similar construction based on an injective Fredholm operator with index $-k$ provides an invertible operator $S$ from $\cV\oplus\cZ_1$ to $\cV\oplus\cZ_2$.
\end{proof}

The above lemma implies in particular that $\cB(\cV)$ contains Fredholm operators of index $\pm k$. To solve Problem \ref{P:BSOP} we need to know if $\fK\subset \cB(\cV)$ contains Fredholm operators of index $k$ or $-k$.

\begin{proposition}\label{P:BSOPchar-findimZs}
Assume $\cZ_1$ and $\cZ_2$ are finite dimensional. Then Problem \ref{P:BSOP} is solvable if and only if $\fK$ contains a Fredholm operator of index $l:=\dim (\cZ_2) - \dim (\cZ_1)$.
\end{proposition}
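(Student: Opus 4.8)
The plan is to prove both implications by reading off the Fredholm data of the $(1,1)$-corner $K:=I_\cV-B_1B_2\in\fK$ of the operator $T$ in \eqref{T}. For the ``only if'' direction, suppose such a $T$ is invertible. Since $\cZ_1$ and $\cZ_2$ are finite dimensional, the entries $A_{12},A_{21},A_{22}$ all have finite rank, so $T-\sbm{K&0\\0&0}$ is finite rank and hence $\sbm{K&0\\0&0}\colon\cV\oplus\cZ_1\to\cV\oplus\cZ_2$ is Fredholm of index $0$. As this block operator has kernel $\kr K\oplus\cZ_1$ and cokernel $\cokr K\oplus\cZ_2$, it follows that $K$ is Fredholm with $\ind(K)=\dim(\cZ_2)-\dim(\cZ_1)=l$. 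Thus $\fK$ contains a Fredholm operator of index $l$.

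For the ``if'' direction I would proceed in two steps. First, the construction: I claim that a \emph{surjective} $K\in\fK$ of index $l\ge0$ (respectively an \emph{injective} $K\in\fK$ of index $l\le0$) can always be embedded as the $(1,1)$-corner of an invertible $T$. Indeed, when $l\ge0$ and $K$ is surjective we have $\dim(\kr K)=l\le\dim(\cZ_2)$; choosing $A_{12}=0$, letting $A_{21}$ vanish on a complement $\cV_0$ of $\kr K$, and picking $[\,A_{21}|_{\kr K}\ \ A_{22}\,]\colon\kr K\oplus\cZ_1\to\cZ_2$ to be an isomorphism (possible since $\dim(\kr K)+\dim(\cZ_1)=\dim(\cZ_2)$) makes $T$ injective, hence invertible by the index-$0$ observation above. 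The case $l\le0$ with $K$ injective is dual, with $A_{21}=0$ and $A_{12},A_{22}$ chosen to cover $\cokr K$ and $\cZ_2$. Note that the necessity of such a ``nice'' $K$ is unavoidable: restricting $T$ to $\kr K\oplus\{0\}$ shows $A_{21}|_{\kr K}$ must be injective, so $\dim(\kr K)\le\dim(\cZ_2)$ is forced.

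The second, and main, step is to upgrade an \emph{arbitrary} Fredholm operator of index $l$ in $\fK$ to a surjective (or injective) one that still lies in $\fK$. This is where the special form $K=I_\cV-B_1B_2$ is essential, and I expect it to be the crux. The two structural facts I would use are: $\kr K\cap\kr B_2=\{0\}$ (if $u=B_1B_2u$ and $B_2u=0$ then $u=0$), and $\ran B_1\subseteq\ran K$ forces $K$ surjective (since $I_\cV=K+B_1B_2$). For $l\ge0$ I would lower $\dim(\cokr K)$ one unit at a time: replacing $B_1$ by $B_1+v\otimes\eta$ keeps the operator in $\fK$ and alters $K$ by the rank-one operator $x\mapsto\eta(B_2x)\,v$; choosing $v\notin\ran K$ (possible while $\cokr K\neq\{0\}$) and $\eta$ with $x\mapsto\eta(B_2x)$ not vanishing on $\kr K$ (possible since $\kr K\neq\{0\}$ and $\kr K\cap\kr B_2=\{0\}$ imply $\kr K\not\subseteq\kr B_2$) produces an element of $\fK$ of the same index $l$ whose cokernel dimension has dropped by one. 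Iterating gives a surjective element of $\fK$ of index $l$. The case $l<0$ is dual: modifying $B_2$ by a rank-one operator changes $K$ by $x\mapsto\phi(x)\,B_1w$, and taking $B_1w\notin\ran K$ (available since $\cokr K\neq\{0\}$ throughout, because then $\dim(\cokr K)=\dim(\kr K)+|l|\ge1$) together with $\phi|_{\kr K}\neq0$ lowers $\dim(\kr K)$ by one.

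Combining the two steps proves the ``if'' direction. The hard part will be the reduction step: the finite-rank perturbations used to shrink the kernel or cokernel must preserve the form $I_\cV-B_1B_2$, so one cannot perturb freely inside $\cB(\cV)$ but only by adjusting a single factor $B_1$ or $B_2$; the two structural facts above are exactly what guarantee that these constrained perturbations are effective and do not stall before a surjective (respectively injective) representative is reached.
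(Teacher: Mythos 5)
Your proof is correct, and while the ``only if'' direction and the final assembly of the invertible $T$ from a surjective (resp.\ injective) representative coincide with what the paper does, your treatment of the crucial reduction step is genuinely different from the paper's. The paper isolates this step as Lemma \ref{L:SurInjFredholm}: there, an arbitrary Fredholm $X=I_\cV-B_1B_2\in\fK$ of index $l$ is corrected by a single finite-rank operator $R$ of rank $r$ to become injective or surjective, and the correction is absorbed into the factorization by first transferring the Fredholm property to $I_\cW-B_2B_1$ (via the Schur coupling of the two operators), concluding that $\cW$ is SUFDS$_{nk}$ for $nk\ge r$, and then replacing $B_1,B_2$ by $\wtilB_1=[\,B_1\ \ \tau_\cZ\,]L^{-1}$ and $\wtilB_2=L\sbm{B_2\\ \pi_\cZ R}$ through an isomorphism $L\colon\cW\oplus\cZ\to\cW$. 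Your argument instead stays entirely inside the factorization: you perturb one factor at a time by rank-one operators, which automatically keeps the operator in $\fK$, and you use the two elementary identities $\kr K\cap\kr B_2=\{0\}$ and $\ran K+\ran B_1=\cV$ (for $K=I_\cV-B_1B_2$) to guarantee that each perturbation simultaneously drops $\dim(\kr K)$ and $\dim(\cokr K)$ by one, so the iteration cannot stall before reaching a surjective (if $l\ge0$) or injective (if $l<0$) element of index $l$. What your route buys is the elimination of all Banach-space-geometric input (no SUFDQ/SUFDS of $\cW$, no isomorphism $\cW\simeq\cW\oplus\cZ$) and a uniform treatment of $l=0$; what the paper's route buys is the by-product, recorded in Lemma \ref{L:SurInjFredholm}, that $\cV$ and $\cW$ are SUFDQ$_{|l|}$, which is of independent interest in Section \ref{S:BSP}. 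Both are complete proofs.
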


In order to prove Proposition \ref{P:BSOPchar-findimZs} we first prove the following lemma, which shows that for $l\neq 0$ without loss of generality the Fredholm operator in Proposition \ref{P:BSOPchar-findimZs} is injective (if $l< 0$) or surjective (if $l> 0$).

\begin{lemma}\label{L:SurInjFredholm}
Assume $\fK$ contains a Fredholm operator of index $l\neq 0$. Then $\fK$ contains a Fredholm operator of index $l$ which is injective (if $l<0$) or surjective (if $l> 0$). Moreover, if $l< 0$, then $\cV$ and $\cW$ are SUFDQ$_k$ for $k:=|l|$.
\end{lemma}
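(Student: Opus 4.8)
The plan is to exploit a structural feature of $\fK$ that substitutes for ordinary finite–rank perturbation: while a general finite–rank perturbation of an element of $\fK$ need not stay in $\fK$, perturbations of the two special shapes $I_\cV-B_1B_2\mapsto I_\cV-B_1(B_2+C)$ and $I_\cV-B_1B_2\mapsto I_\cV-(B_1+D)B_2$ manifestly do, for any $C\in\cB(\cV,\cW)$ and any $D\in\cB(\cW,\cV)$. Thus, starting from a given $F=I_\cV-B_1B_2\in\fK$ that is Fredholm of index $l\neq0$, I may add to $F$ any operator of the form $-B_1C$ (absorbing $C$ into $B_2$) or $-DB_2$ (absorbing $D$ into $B_1$) and remain inside $\fK$; taking $C$ or $D$ of finite rank keeps the perturbed operator Fredholm of the same index $l$. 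I note in passing that $l\neq0$ forces $\cV$ and $\cW$ to be infinite dimensional, since a finite–dimensional $\cW$ would make every $B_1B_2$ finite rank and hence every element of $\fK$ of index $0$.

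For the surjective case $l>0$ I correct the cokernel. From $x=Fx+B_1B_2x$ one gets $\ran F+\ran B_1=\cV$, so with $c:=\dim\cokr F$ I may pick $w_1,\dots,w_c\in\cW$ whose images $B_1w_1,\dots,B_1w_c$ form a basis of a complement $\cN$ of $\ran F$. Since $\dim\kr F=c+l\geq c$, I choose a finite–rank $C\in\cB(\cV,\cW)$ that vanishes on a complement $\cM$ of $\kr F$ and maps $\kr F$ onto $\textup{span}\,\{w_1,\dots,w_c\}$, so that $B_1C$ carries $\kr F$ onto $\cN$. Then $F-B_1C=I_\cV-B_1(B_2+C)\in\fK$ is Fredholm of index $l$, and on the splitting $\cV=\kr F\oplus\cM$ its range equals $\ran F+\cN=\cV$, so it is surjective. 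The injective case $l<0$ is dual: for $v\in\kr F$ one has $v=B_1B_2v$, whence $B_2|_{\kr F}$ is injective and $\dim B_2(\kr F)=\dim\kr F$. I choose a finite–rank $D\in\cB(\cW,\cV)$ with range in a complement $\cN$ of $\ran F$ (note $\dim\cN=\dim\kr F+|l|\geq\dim\kr F$) that maps $B_2(\kr F)$ injectively into $\cN$ and vanishes off $B_2(\kr F)$. Then $F-DB_2=I_\cV-(B_1+D)B_2\in\fK$ is Fredholm of index $l$, and if $(F-DB_2)v=0$ then $Fv=DB_2v\in\ran F\cap\cN=\{0\}$, forcing $Fv=0$ and $DB_2v=0$; injectivity of $D$ on $B_2(\kr F)$ then gives $v=0$, so $F-DB_2$ is injective.

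For the ``moreover'' statement, let $F'=I_\cV-B_1'B_2'\in\fK$ be the injective Fredholm operator of index $l<0$ just produced and put $k:=|l|$. Since $F'\in\cB(\cV)$ is Fredholm of index $-k$, Proposition \ref{P:BanPropEquiv} gives that $\cV$ is SUFDQ$_k$. For $\cW$ I pass to the companion operator $G:=I_\cW-B_2'B_1'\in\cB(\cW)$: the map $B_2'$ restricts to a bijection $\kr F'\to\kr G$, and applying the same observation to the adjoints yields $\dim\cokr F'=\dim\cokr G$, so $G$ is Fredholm with $\ind(G)=\ind(F')=-k$; Proposition \ref{P:BanPropEquiv} then gives that $\cW$ is SUFDQ$_k$ as well.

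I expect the main obstacle to be conceptual rather than computational: one cannot use arbitrary finite–rank corrections but must confine the perturbations to the forms $-B_1C$ and $-DB_2$ so as to remain inside $\fK$, and then arrange these corrections to act on $\kr F$ (respectively on $B_2(\kr F)$) in precisely the way needed to reach surjectivity (respectively injectivity) while the index is protected by finiteness of rank. The transfer of SUFDQ$_k$ from $\cV$ to $\cW$ rests on the standard identity $\ind(I-AB)=\ind(I-BA)$, which I would either cite or justify by the short kernel/cokernel bijection indicated above.
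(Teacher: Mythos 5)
Your proof is correct, and the heart of it differs from the paper's. The paper first proves the ``moreover'' part for \emph{both} signs of $l$ (via the companion operator $I_\cW-B_2B_1$ and the citation to \cite[p.~211]{BGKR05}), takes an \emph{arbitrary} finite-rank $R$ making $X-R$ injective or surjective, and then absorbs $R$ into the product form: since $\cW$ is SUFDS$_{nk}$ one has $\cW\simeq\cW\oplus\cZ$ for a finite-dimensional $\cZ\supseteq\ran R$, and writing $\wtilB_1=\sbm{B_1&\tau_\cZ}L^{-1}$, $\wtilB_2=L\sbm{B_2\\ \pi_\cZ R}$ shows $X-R=I_\cV-\wtilB_1\wtilB_2\in\fK$. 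You instead never leave $\fK$: you confine the correction to the shapes $-B_1C$ and $-DB_2$, and exploit the structural identities $\ran F+\ran B_1=\cV$ and the injectivity of $B_2|_{\kr F}$ to build $C$ (resp.\ $D$) hitting a complement of $\ran F$ (resp.\ separating $\kr F$). Your route is more elementary and self-contained for the construction --- it does not invoke Proposition~\ref{P:BanPropEquiv} or the SUFDS property of $\cW$ there --- whereas the paper's argument yields the stronger, reusable fact that \emph{any} finite-rank perturbation of an element of $\fK$ stays in $\fK$ (given the SUFDQ property), and in passing establishes SUFDQ$_k$ of $\cV$ and $\cW$ for both signs of $l$. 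For the ``moreover'' you follow essentially the paper's path; the only point to tidy is the cokernel comparison for $G=I_\cW-B_2'B_1'$: to write $\dim\cokr G=\dim\kr G^*$ you need $\ran G$ closed, which is part of the standard fact that $I-AB$ is Fredholm iff $I-BA$ is (with equal index), so you should cite that statement in full (e.g.\ \cite[p.~211]{BGKR05}) rather than only the kernel/adjoint-kernel bijections.
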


\begin{proof}[\bf Proof]
Let $X=I_\cV-B_1B_2\in\fK$, with $B_1\in\cB(\cW,\cV)$ and $B_2\in\cB(\cV,\cW)$, be Fredholm with index $l$. Since $X\in\cB(\cV)$, we know $\cV$ is SUFDQ$_k$ for $k=|l|$. Since $X$ is Schur coupled to $Y:=I_\cW-B_2B_1 \in\cB(\cW)$, via the operator matrix $\sbm{I_\cV&B_1\\B_2&I_\cW}$, it follows \cite[p.\ 211]{BGKR05} that $Y$ is Fredholm with index $l$, hence also $\cW$ is SUFDQ$_k$. This proves the last claim of the proposition. Next we construct a Fredholm operator $\wtilX\in\fK$ of index $l$ which is injective or surjective.

There exists a finite rank operator $R\in\cB(\cV)$, say $R$ has rank $r$, so that $\wtilX:=X-R$ is injective (if $l\leq 0$) or surjective (if $l\geq 0$). In that case $\wtilX$ is also Fredholm with index $l$. Determine an integer $n>0$ so that $r\leq n k$ and note that $\cW$ is also SUFDQ$_{nk}$ and hence SUFDS$_{nk}$. Let $\cZ\subset\cV$ be a subspace of dimension $nk$ that contains the range of $R$. Then $\cW\simeq \cW\oplus \cZ$. Let $L\in\cB(\cW\oplus\cZ,\cW)$ be an isomorphism. Since $\cZ$ is complemented in $\cV$, the embedding $\tau_\cZ\in\cB(\cZ,\cV)$ of $\cZ$ into $\cV$, along an arbitrary complement of $\cZ$, has a left inverse $\pi_\cZ \in\cB(\cV,\cZ)$. Now set
\[
\wtilB_1=\mat{cc}{B_1&\tau_\cZ}L^{-1}\in\cB(\cW,\cV) \ands
\wtilB_2=L\mat{c}{B_2\\ \pi_\cZ R}\in\cB(\cV,\cW).
\]
We then obtain that
\[
\wtilX=I_\cV-B_1B_2-R=I_\cV-\wtilB_1\wtilB_2\in\fK.\qedhere
\]
\end{proof}

\begin{proof}[\bf Proof of Proposition \ref{P:BSOPchar-findimZs}]
In case $l=0$, note that $I_\cV\in\fK$, hence $\fK$ contains a Fredholm operator index $l$. On the other hand, for $l=0$ we have $\cZ_1\simeq\cZ_2$, so that Problem \ref{P:BSOP} is solvable by Lemma \ref{L:Z1Z2iso}. In the remainder of the proof assume $l\neq 0$.

Assume Problem \ref{P:BSOP} is solvable and $T$ as in \eqref{T} is invertible. Then the left upper corner of $T$ is in $\fK$ and reasoning as in the first paragraph of the proof of Lemma \ref{L:BanPropChar-findim} this operator is Fredholm with index $l$.

Conversely, assume $\fK$ contains a Fredholm operator $F$ with index $l$. According to Lemma \ref{L:SurInjFredholm}, we may assume $F$ is surjective if $l\geq 0$, or injective if $l\leq 0$. Assume $l\leq 0$, so that $F$ is injective and the range of $F$ has codimension $-l$. Let $\cV'$ be any complement of the range of $F$. Decompose $\cZ_1=\cZ_1'\oplus\cZ_1''$  with $\cZ_1'\simeq \cV'$ and $\cZ_1''\simeq\cZ_2$. Let $A'_{12}\in\cB(\cZ_1',\cV)$ be a left invertible operator with range $\cV'$ and $A'_{22}\in\cB(\cZ_1'',\cZ_2)$ an isomorphism. Then Problem \ref{P:BSOP} is solvable via the invertible operator
\[
T=\mat{ccc}{F& A'_{12}&0\\ 0&0& A'_{22}}:\mat{c}{\cV\\ \cZ_1' \\ \cZ_1''} \to \mat{c}{\cV\\ \cZ_2}.
\]
In case $l\geq 0$ a similar construction provides an invertible $T$ as in \eqref{T} with $A_{12}=0$.
\end{proof}

\begin{corollary}
Assume $\cV$ and $\cW$ are such that $\fK$ contains a Fredholm operator of index $l$. Then \eqref{BanProps} holds and Problem \ref{P:BSOP} is solvable for all finite dimensional Banach spaces $\cZ_1$ and $\cZ_2$ so that $\dim (\cZ_2) -\dim(\cZ_1)=n l$ for a positive integer $n$.
\end{corollary}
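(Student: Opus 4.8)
The plan is to reduce both assertions to earlier results, the only genuinely new ingredient being that $\fK$ is stable under taking powers. Fix the hypothesised operator $X=I_\cV-B_1B_2\in\fK$ of index $l$, with $B_1\in\cB(\cW,\cV)$ and $B_2\in\cB(\cV,\cW)$, and put $Y:=I_\cW-B_2B_1\in\cB(\cW)$ and $k:=|l|$. The degenerate case $l=0$ is immediate: then $\dim(\cZ_2)=\dim(\cZ_1)$ forces $\cZ_1\simeq\cZ_2$, so \eqref{BanProps} is trivial and Problem \ref{P:BSOP} is solvable by Lemma \ref{L:Z1Z2iso}. I therefore assume $l\neq 0$.

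For \eqref{BanProps} I would argue via the property SUFDQ. Since $X\in\cB(\cV)$ is Fredholm of index $l$, Proposition \ref{P:BanPropEquiv} shows $\cV$ is SUFDQ$_k$. As in the proof of Lemma \ref{L:SurInjFredholm}, $X$ is Schur coupled to $Y$ through the matrix $\sbm{I_\cV&B_1\\B_2&I_\cW}$, so $Y$ is Fredholm of index $l$ by \cite[p.\ 211]{BGKR05}, whence $\cW$ is SUFDQ$_k$ as well. By the remark following the definition of SUFDQ$_k$, both spaces are then SUFDQ$_{nk}$; since $nk=|\dim(\cZ_2)-\dim(\cZ_1)|$, Lemma \ref{L:BanPropChar-findim} gives \eqref{BanProps}.

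For solvability, Proposition \ref{P:BSOPchar-findimZs} reduces matters to exhibiting an element of $\fK$ that is Fredholm of index exactly $\dim(\cZ_2)-\dim(\cZ_1)=nl$. The one computation I would carry out is the power identity
\[
X^n=(I_\cV-B_1B_2)^n=I_\cV-B_1\Big(\sum_{j=0}^{n-1}Y^j\Big)B_2,
\]
proved by induction on $n$: writing $X^n=I_\cV-B_1C_nB_2$ and expanding $X^{n+1}=X^nX$ yields the recursion $C_{n+1}=I_\cW+C_nY$ with $C_1=I_\cW$, whose solution is $C_n=\sum_{j=0}^{n-1}Y^j\in\cB(\cW)$. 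Taking $\tilde B_1=B_1$ and $\tilde B_2=C_nB_2\in\cB(\cV,\cW)$ then exhibits $X^n=I_\cV-\tilde B_1\tilde B_2\in\fK$, and as a composition of $n$ Fredholm operators of index $l$ it is Fredholm of index $nl$. Proposition \ref{P:BSOPchar-findimZs} now yields solvability.

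The step I expect to be the crux is this stability of $\fK$ under powers. The set $\fK$ is not visibly closed under any natural operation — the naive product $(I_\cV-B_1B_2)(I_\cV-B_1'B_2')$ need not lie in $\fK$ — and it is the factored identity above, keeping $B_1$ on the left and $B_2$ on the right while absorbing all cross terms into a polynomial in $Y$, that lets one promote an index-$l$ member of $\fK$ to an index-$nl$ member without leaving $\fK$.
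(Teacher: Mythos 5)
Your proof is correct; the paper states this corollary without any proof, and your argument is exactly the intended route: establish \eqref{BanProps} via Lemma \ref{L:BanPropChar-findim} (using that $X$ and $Y=I_\cW-B_2B_1$ are both Fredholm of index $l$, so $\cV$ and $\cW$ are SUFDQ$_{n|l|}$) and then invoke Proposition \ref{P:BSOPchar-findimZs}. The one ingredient the paper leaves implicit is that $\fK$ contains a Fredholm operator of index $nl$, and your power identity $(I_\cV-B_1B_2)^n=I_\cV-B_1\bigl(\sum_{j=0}^{n-1}(I_\cW-B_2B_1)^j\bigr)B_2$ supplies it correctly.
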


The next corollary follows directly by combining Proposition \ref{P:BSOPchar-findimZs} with Lemma \ref{L:fKonlyFredholm}.

\begin{corollary}\label{C:EssIncFinDimZs}
Assume $\cV$ and $\cW$ are essentially incomparable and $\cZ_1$ and $\cZ_2$ are finite dimensional. Then Problem \ref{P:BSOP} is solvable if and only if $\dim(\cZ_1)=\dim(\cZ_2)$.
\end{corollary}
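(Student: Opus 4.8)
The plan is to read the statement off directly from the two results cited immediately before it, Proposition~\ref{P:BSOPchar-findimZs} and Lemma~\ref{L:fKonlyFredholm}, so that no new construction is needed. Write $l:=\dim(\cZ_2)-\dim(\cZ_1)$. Since $\cZ_1$ and $\cZ_2$ are finite dimensional, Proposition~\ref{P:BSOPchar-findimZs} reduces solvability of Problem~\ref{P:BSOP} to a single question: does the set $\fK$ contain a Fredholm operator of index $l$? The entire content of the corollary is then to decide for which $l$ this happens, and this is precisely where the essential incomparability of $\cV$ and $\cW$ is brought to bear.

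For the forward (only if) direction, I would invoke Lemma~\ref{L:fKonlyFredholm}: because $\cV$ and $\cW$ are essentially incomparable, every element of $\fK$ is Fredholm of index $0$. Hence $\fK$ can contain a Fredholm operator of index $l$ only when $l=0$, i.e.\ when $\dim(\cZ_1)=\dim(\cZ_2)$. Combining this with the equivalence in Proposition~\ref{P:BSOPchar-findimZs}, solvability forces $\dim(\cZ_1)=\dim(\cZ_2)$.

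For the reverse (if) direction, suppose $\dim(\cZ_1)=\dim(\cZ_2)$, so that $l=0$. Taking $B_1=0$ and $B_2=0$ shows $I_\cV\in\fK$, and $I_\cV$ is a Fredholm operator of index $0=l$; Proposition~\ref{P:BSOPchar-findimZs} then yields solvability. Alternatively, equal finite dimensions give $\cZ_1\simeq\cZ_2$, so one may bypass $\fK$ altogether and apply Lemma~\ref{L:Z1Z2iso} directly.

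I do not expect any genuine obstacle, since the substantive work has already been done in Proposition~\ref{P:BSOPchar-findimZs} and Lemma~\ref{L:fKonlyFredholm}. The one point deserving care is the word \emph{only} in Lemma~\ref{L:fKonlyFredholm}: it is exactly this that guarantees essential incomparability excludes \emph{every} nonzero index from $\fK$, and hence that solvability is possible for no value of $l$ other than $0$.
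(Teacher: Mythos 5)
Your proof is correct and is exactly the paper's intended argument: the paper states that the corollary "follows directly by combining Proposition \ref{P:BSOPchar-findimZs} with Lemma \ref{L:fKonlyFredholm}," which is precisely the reduction you carry out (including the observation that $I_\cV\in\fK$ handles the case $l=0$).
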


We now employ Proposition \ref{P:BSOPchar-findimZs} to obtain a solution criterion that is independent of the set $\fK$.

\begin{proposition}\label{P:BSOPfindimiso}
Assume $\cZ_1$ and $\cZ_2$ are finite dimensional. Problem \ref{P:BSOP} is solvable in case $\cV$ has a finite codimensional subspace that is isomorphic to a complemented subspace of $\cW$ or in case $\cW$ has a finite codimensional subspace that is isomorphic to a complemented subspace of $\cV$.
\end{proposition}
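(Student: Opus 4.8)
The plan is to reduce the whole statement, via Proposition \ref{P:BSOPchar-findimZs}, to the single task of exhibiting a Fredholm operator of index $l:=\dim(\cZ_2)-\dim(\cZ_1)$ inside the set $\fK$. By Corollary \ref{C:VorWfindim} I may assume $\cV$ and $\cW$ are both infinite dimensional, and the case $l=0$ is immediate (then $\cZ_1\simeq\cZ_2$, so Lemma \ref{L:Z1Z2iso} applies), so the real content is the case $l\neq 0$. The key background fact I would invoke is Lemma \ref{L:BanPropChar-findim}: since \eqref{BanProps} holds with $\cZ_1,\cZ_2$ finite dimensional, both $\cV$ and $\cW$ are SUFDQ$_{|l|}$, whence by Proposition \ref{P:BanPropEquiv} there is a Fredholm operator $X\in\cB(\cV)$ of index exactly $l$ (and likewise one in $\cB(\cW)$).

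First I would treat the case where $\cV_0\subseteq\cV$ is a subspace of finite codimension $m$ isomorphic to a complemented subspace of $\cW$. Being finite codimensional and closed, $\cV_0$ is complemented, so write $\cV=\cV_0\oplus\cV_1$ with $\dim\cV_1=m$, inclusion $\iota_0$, projection $\pi_0$, and $P_0=\iota_0\pi_0$, $P_1=I_\cV-P_0$. The isomorphism onto a complemented subspace of $\cW$ furnishes a left-invertible $R_0\in\cB(\cV_0,\cW)$ with left inverse $R_0^+$, i.e.\ $R_0^+R_0=I_{\cV_0}$. The crucial construction is then to set $B_2=R_0\pi_0\in\cB(\cV,\cW)$ and $B_1=(I_\cV-X)\iota_0 R_0^+\in\cB(\cW,\cV)$. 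Because $R_0^+R_0=I_{\cV_0}$ collapses the composition, I compute $B_1B_2=(I_\cV-X)P_0$, so that $I_\cV-B_1B_2=X+(I_\cV-X)P_1$, which is $X$ modulo the finite-rank term $(I_\cV-X)P_1$. Hence $I_\cV-B_1B_2\in\fK$ is Fredholm of index $\ind X=l$, and Proposition \ref{P:BSOPchar-findimZs} closes this case.

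For the second case, where $\cW$ has a finite codimensional subspace isomorphic to a complemented subspace of $\cV$, I would exploit the same symmetry used in Lemma \ref{L:SurInjFredholm}: for any $B_1,B_2$ the operators $I_\cV-B_1B_2$ and $I_\cW-B_2B_1$ are Schur coupled via $\sbm{I_\cV&B_1\\B_2&I_\cW}$, hence simultaneously Fredholm of the same index \cite[p.\ 211]{BGKR05}. The set $\{I_\cW-B_2B_1\colon B_1\in\cB(\cW,\cV),\,B_2\in\cB(\cV,\cW)\}$ is precisely the analogue of $\fK$ with $\cV$ and $\cW$ interchanged, and the second hypothesis is exactly the first hypothesis under that interchange. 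Applying the first-case construction with $\cV$ and $\cW$ swapped (using that $\cW$ is SUFDQ$_{|l|}$) produces a Fredholm operator of index $l$ in this swapped set, and the Schur-coupling identity transports it back into $\fK$ as a Fredholm operator of index $l$; Proposition \ref{P:BSOPchar-findimZs} again applies.

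I expect the main obstacle to be finding the right factorization in the first case: one needs $B_1B_2$ to be a prescribed-index Fredholm perturbation of the identity while factoring through $\cW$, and the idea that makes it work is to route $B_2$ through the embedding $R_0$ of the finite-codimensional piece $\cV_0$ while absorbing the free Fredholm operator $X$ into $B_1$, so that $R_0^+R_0=I_{\cV_0}$ leaves only a finite-rank error and the index is controlled by $X$. Once this is in place, the second case costs nothing beyond recognizing the $\cV\leftrightarrow\cW$ symmetry of $\fK$ modulo Schur coupling.
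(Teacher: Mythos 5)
Your argument is correct, and it shares the paper's master plan --- reduce via Proposition \ref{P:BSOPchar-findimZs} to exhibiting a Fredholm operator of index $l=\dim(\cZ_2)-\dim(\cZ_1)$ inside $\fK$, with Lemma \ref{L:BanPropChar-findim} supplying the SUFDQ$_k$ property for $k=|l|$ --- but the construction of that operator is genuinely different. The paper first transfers SUFDQ$_k$ to the common summand $\cV_1\simeq\cW_1$ via Lemma \ref{L:BSPfindimcomp} (applied to whichever isomorphism in \eqref{BanProps} has the finite-dimensional complement), picks a Fredholm $T_1\in\cB(\cV_1)$ of index $l$ there, and sets $B_1=\sbm{S_1^{-1}&0\\0&0}$, $B_2=\sbm{S_1(T_1-I_{\cV_1})&0\\0&0}$ so that $I_\cV-B_1B_2=\sbm{T_1&0\\0&I_{\cV_2}}$; since this never requires $\cV_2$ to be finite dimensional, one block computation covers both hypotheses at once. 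You instead keep the Fredholm operator $X$ on all of $\cV$, route $B_2$ through the left-invertible embedding of the finite-codimensional piece, and absorb the finite-rank error $(I_\cV-X)P_1$; this spares you Lemma \ref{L:BSPfindimcomp}, but because it genuinely uses finite codimension of $\cV_0$ in $\cV$, you must treat the second hypothesis separately via the Schur-coupling symmetry between $I_\cV-B_1B_2$ and $I_\cW-B_2B_1$ --- the same fact, with the same citation, that the paper invokes in Lemma \ref{L:SurInjFredholm}, and since that relation is symmetric your reverse use of it is legitimate. Both routes are sound: the paper's is more uniform across the two cases, while yours is a little more self-contained in its reliance on the auxiliary lemmas of Section \ref{S:BSP}.
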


\begin{proof}[\bf Proof]
In case $\cZ_1\simeq\cZ_2$ we are done. So assume $k:=|\dim (\cZ_2) - \dim (\cZ_1)|>0$. By Lemma \ref{L:BanPropChar-findim}, $\cV$ and $\cW$ are SUFDQ$_k$. Assume $\cV=\cV_1\oplus\cV_2$  and $\cW=\cW_1\oplus\cW_2$ with $\cV_1\simeq \cW_1$. If $\dim(\cV_2)<\infty$, then $\cV_1$ is SUFDQ$_k$ by applying Lemma \ref{L:BSPfindimcomp} to the first isomorphism in \eqref{BanProps}, and hence $\cW_1$ is SUFDQ$_k$. In case $\dim(\cW_2)<\infty$, we get that $\cV_1$ and $\cW_1$ are SUFDQ$_k$ by applying the same argument to the second isomorphism in \eqref{BanProps}. Under either of the conditions in the proposition we thus get that $\cV_1$ and $\cW_1$ are SUFDQ$_k$ and it suffices to prove that Problem \ref{P:BSOP} is solvable from this fact.

Now let $T_1\in\cB(\cV_1)$ be a Fredholm operator of index $l:=\dim (\cZ_2) - \dim (\cZ_1)$, which exists by Proposition \ref{P:BanPropEquiv}. Let $S_1\in\cB(\cV_1,\cW_1)$ be invertible. Now set
\begin{align*}
B_1&=\mat{cc}{S_1^{-1}&0\\ 0&0}:\mat{c}{\cW_1\\\cW_2}\to \mat{c}{\cV_1\\\cV_2},\\
B_2&=\mat{cc}{S_1 (T_1-I_{\cV_1})&0\\0&0}:\mat{c}{\cV_1\\\cV_2}\to \mat{c}{\cW_1\\\cW_2}.
\end{align*}
It then follows that
\[
I_{\cV}-B_1 B_2 =\mat{cc}{T_1&0\\ 0& I_{\cV_2}}\in\fK,
\]
and thus $I_{\cV}-B_1 B_2$ is Fredholm with index $l$. Hence Problem \ref{P:BSOP} is solvable by Proposition \ref{P:BSOPchar-findimZs}.
\end{proof}

In Proposition \ref{P:BSOPfindimiso} one cannot remove the condition that one of the subspaces of $\cV$ and $\cW$ has finite codimension, as shown in the next example.

\begin{example}\label{E:EAEnotSC}
Consider $\cV=\cV_0\oplus \ell^p$ and $\cW=\cV_0\oplus \ell^q$ with $1<p\neq q<\infty$ and where $\cV_0$ is an infinite dimensional Banach space with few operators, see Remark \ref{R:BSexamples}, in particular $\cV_0$ is SUFDQ$_k$ for no $k>0$. Since $\ell^p$ and $\ell^q$ are prime, they are SUFDQ, and thus $\cV$ and $\cW$ are SUFDQ, by Lemma \ref{L:BSPsum}. Hence for all finite dimensional $\cZ_1$ and $\cZ_2$ we have \eqref{BanProps}. Although $\cV$ and $\cW$ are not essentially incomparable, we claim that still all Fredholm operators in $\fK$ have index $0$.

We claim that $\cV_0$ and $\ell^p$ are projection incomparable. Indeed, assume this is not the case and $\cK$ is an infinite dimensional complemented subspace of $\ell^p$ which is isomorphic to a complemented subspace of $\cV_0$. Since $\ell^p$ is prime, $\ell^p$ and $\cK$ are isomorphic, so that $\cV_0$ contains a complemented copy of $\ell^p$. But then $\cV_0$ would be SUFDQ. The contradiction proves our claim. Likewise $\cV_0$ and $\ell^q$ are projection incomparable. In fact, by Theorem 7.97 in \cite{A04}, since all operators on $\cV_0$ are either Fredholm with index 0 or inessential (in fact strongly singular), $\cV_0$ and $\ell^p$ are essentially incomparable, and so are $\cV_0$ and $\ell^q$.

From the above it follows that $B_1\in \cB(\cV_0\oplus\ell^q,\cV_0\oplus\ell^p)$ and $B_2\in \cB(\cV_0\oplus\ell^p,\cV_0\oplus\ell^q)$ have the form
\[
\begin{aligned}
B_1&=\mat{cc}{\la_1 I_{\cV_0} -S_1 & B_{12}^{(1)}\\ B_{21}^{(1)} & B_{22}^{(1)} }:\mat{c}{\cV_0\\ \ell^q}\to \mat{c}{\cV_0\\ \ell^p},\\
B_2&=\mat{cc}{\la_2 I_{\cV_0} -S_2 & B_{12}^{(2)}\\ B_{21}^{(2)} & B_{22}^{(2)} }:\mat{c}{\cV_0\\ \ell^p}\to \mat{c}{\cV_0\\ \ell^q}
\end{aligned}
\]
with $\la_1,\la_2\in\BC$, $S_1$, $S_2$ strictly singular (hence inessential) and all $B_{ij}^{(1)}$ and $B_{ij}^{(2)}$ inessential. Since the class of inessential operators is closed under left and right multiplication with any bounded operator, it follows that
\[
I-B_1B_2=\mat{cc}{(1-\la_1\la_2)I_{\cV_0} &0 \\ 0& I_{\ell^p}} -B
\]
where $B$ is an inessential operator in $\cB(\cV_0\oplus\ell^p)$. In particular, $I-B_1B_2$ is Fredholm with index 0 in case $\la_1\la_2\neq 0$ and not Fredholm in case $\la_1\la_2=0$ (for otherwise $\sbm{0&0\\0&I_{\ell^p}}$ would be Fredholm). Hence, in this case also all Fredholm operators in $\fK$ have index 0, so that Problem \ref{P:BSOP} with $\cV$ and $\cW$ as in this example and $\cZ_1$ and $\cZ_2$ finite dimensional is only solvable in case $\cZ_1\simeq \cZ_2$.
\end{example}

\subsection{Essentially incomparable Banach spaces}\label{SubS:EssInc}

In Lemma \ref{L:fKonlyFredholm} and Corollary \ref{C:EssIncFinDimZs} above we already encountered the notion of essentially incomparable Banach spaces. Next we consider some other cases of Problem \ref{P:BSOP} where essential incomparability plays a role. All the results rely on the following general lemma.


\begin{lemma}\label{L:EssIncImplic}
Let $S_1,\cS_2,\cT_1,\cT_2$ be Banach spaces so that $\cS_1\oplus\cT_1 \simeq \cS_2\oplus\cT_2$ and so that $\cT_1$ and $\cT_2$ are essentially incomparable. Then there is a subspace of $\cT_1$ with finite codimension which is isomorphic to a complemented subspace of $\cS_2$. In particular, if $\cT_1$ or $\cS_2$ is SUFDQ, then $\cS_2$ contains a complemented copy of $\cT_1$.
\end{lemma}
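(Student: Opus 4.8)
The plan is to exploit an arbitrary isomorphism $\Phi\colon\cS_1\oplus\cT_1\to\cS_2\oplus\cT_2$ (which exists by the hypothesis $\cS_1\oplus\cT_1\simeq\cS_2\oplus\cT_2$) together with its inverse $\Psi:=\Phi^{-1}$, written in block form
\[
\Phi=\mat{cc}{\Phi_{11}&\Phi_{12}\\ \Phi_{21}&\Phi_{22}}:\mat{c}{\cS_1\\ \cT_1}\to\mat{c}{\cS_2\\ \cT_2},\qquad
\Psi=\mat{cc}{\Psi_{11}&\Psi_{12}\\ \Psi_{21}&\Psi_{22}}:\mat{c}{\cS_2\\ \cT_2}\to\mat{c}{\cS_1\\ \cT_1}.
\]
The decisive corner is $\Phi_{22}\in\cB(\cT_1,\cT_2)$: since $\cT_1$ and $\cT_2$ are essentially incomparable, $\Phi_{22}$ is inessential. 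Reading off the $(2,2)$-entry of $\Psi\Phi=I_{\cS_1\oplus\cT_1}$ gives $\Psi_{21}\Phi_{12}=I_{\cT_1}-\Psi_{22}\Phi_{22}$, and by the very definition of inessentiality (applied with $S=\Psi_{22}$) the right-hand side, which I will call $G$, is Fredholm of index $0$. Thus the composition $G=\Psi_{21}\Phi_{12}\in\cB(\cT_1)$ is Fredholm; only its Fredholmness, not the index, will be used.

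From here I would extract a finite-codimensional subspace on which $\Phi_{12}$ is an into-isomorphism with complemented range. Since $G$ is Fredholm, $\kr G$ is finite dimensional; I take $\cT_1^0$ to be a complement of $N:=\kr G$ in $\cT_1$, so $\codim\cT_1^0<\infty$ and $G|_{\cT_1^0}$ is an isomorphism onto the closed, finite-codimensional subspace $\ran G$. Factoring $G=\Psi_{21}\Phi_{12}$ shows $\Phi_{12}|_{\cT_1^0}$ is bounded below, hence an isomorphism onto its closed range $\cR:=\Phi_{12}(\cT_1^0)\subseteq\cS_2$. To see that $\cR$ is complemented I would exhibit a bounded left inverse of $\Phi_{12}|_{\cT_1^0}$, namely $L:=(G|_{\cT_1^0})^{-1}\rho\,\Psi_{21}$, where $\rho\colon\cT_1\to\ran G$ is a bounded projection along a finite-dimensional complement; a direct check using $\Psi_{21}\Phi_{12}=G$ gives $L\,\Phi_{12}|_{\cT_1^0}=I_{\cT_1^0}$, so that $\Phi_{12}|_{\cT_1^0}L$ is a bounded projection of $\cS_2$ onto $\cR$. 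This already yields the first assertion, with $\cT_1^0$ the required finite-codimensional subspace of $\cT_1$ and $\cR\simeq\cT_1^0$ its complemented isomorphic copy in $\cS_2$.

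For the ``in particular'' part I would feed in the SUFDQ hypothesis to upgrade $\cT_1^0$ to all of $\cT_1$, noting $\cT_1=\cT_1^0\oplus N$ with $N$ finite dimensional, whence $\cT_1^0\simeq\cT_1/N$. If $\cT_1$ is SUFDQ then $\cT_1/N\simeq\cT_1$, so $\cR\simeq\cT_1$ is already a complemented copy of $\cT_1$ in $\cS_2$. If instead $\cS_2$ is SUFDQ, write $\cS_2=\cR\oplus\cR'$; then $\cS_2\oplus N\simeq(\cR\oplus N)\oplus\cR'\simeq\cT_1\oplus\cR'$, so $\cT_1$ is complemented in $\cS_2\oplus N$, and since SUFDQ implies SUFDS \cite[Lemma 3.3]{tHMRR19} one has $\cS_2\oplus N\simeq\cS_2$; hence $\cT_1$ is complemented in $\cS_2$.

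The main obstacle is the complementation step: knowing that $\Phi_{12}$ is bounded below on $\cT_1^0$ only guarantees that its range is closed and isomorphic to $\cT_1^0$, and the extra input forcing the range to be complemented is precisely the factorization $G=\Psi_{21}\Phi_{12}$ through the invertible $G|_{\cT_1^0}$, which manufactures the left inverse $L$. The subtlety to watch is that $\Psi_{21}$ maps into all of $\cT_1$ rather than into $\ran G$, which is why the projection $\rho$ must be inserted before inverting $G|_{\cT_1^0}$; and in the SUFDQ$(\cS_2)$ case the finite-dimensional defect $N$ must be reabsorbed through the SUFDS property rather than discarded.
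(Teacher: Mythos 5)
Your proof is correct and follows essentially the same route as the paper: the same block decomposition of an isomorphism and its inverse, the same key identity $\Psi_{21}\Phi_{12}=I_{\cT_1}-\Psi_{22}\Phi_{22}$ giving a Fredholm operator of index $0$, and the same SUFDQ/SUFDS bookkeeping at the end. The only difference is cosmetic: where the paper cites the left-Atkinson characterization from \cite{A04} to conclude that $\Phi_{12}$ has finite-dimensional kernel and complemented range, you construct the left inverse $L$ and the projection $\Phi_{12}|_{\cT_1^0}L$ explicitly, which makes that step self-contained.
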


\begin{proof}[\bf Proof]
Let $T$ be an isomorphism from $\cS_1\oplus\cT_1$ to $\cS_2\oplus\cT_2$ and decompose $T$ and $T^{-1}$ as
\[
T=\mat{cc}{T_{11}&T_{12}\\T_{21}&T_{22}}\mat{c}{\cS_1\\ \cT_1} \to \mat{c}{\cS_2\\ \cT_2},\
T^{-1}=\mat{cc}{S_{11}&S_{12}\\S_{21}&S_{22}}\mat{c}{\cS_2\\ \cT_2} \to \mat{c}{\cS_1\\ \cT_1}.
\]
Since $\cT_1$ and $\cT_2$ are essentially incomparable, $T_{22}$ and $S_{22}$ are inessential operators. Writing out $T^{-1}T=I$ in terms of the decompositions of $T$ and $T^{-1}$ we find that $I_{\cT_1}=S_{21}T_{12}+S_{22}T_{22}$. Hence $S_{21}T_{12}=I_{\cT_1}-S_{22}T_{22}$, which implies $S_{21}T_{12}$ is Fredholm with index 0, because $T_{22}$ and $S_{22}$ are inessential. This in turn implies that $T_{12}$ is left Atkinson, that is, $T_{12}$ has finite dimensional kernel and complemented range, cf., Definition 7.1 and Theorem 7.2 in \cite{A04} (noting that we can replace compact operators in Definition 7.1 by inessential operators). Likewise, $S_{21}$ is right Atkinson, but we do not need this fact in the current proof. Let $\cT_1'\subset\cT_1$ be any complement of $\kr T_{12}$, so that $\cT_1'$ has finite codimension.  Then $T_{12}$ restricted to $\cT_1'$ is an isomorphism from $\cT_1'$ onto the range of $T_{12}$, which is a complemented subspace of $\cS_2$. This proves the first claim.

If $\cT_1$ is SUFDQ, then $\cT_1$ is isomorphic to $\cT_1'$ and hence to a complemented subspace of $\cS_2$. Assume $\cS_2$ is SUFDQ. Note that
\[
\cT_1\simeq \cT_1'\oplus \kr T_{12}\simeq T_{12}(\cT_1')\oplus \kr T_{12}.
\]
Let $S_2'$ be a complement of $T_{12}(\cT_1')$ in $\cS_2$, so that $\cS_2=T_{12}(\cT_1')\oplus\cS_2'$. Since $\dim(\kr T_{12})< \infty$, we have
\[
\cS_2\simeq \cS_2\oplus\kr T_{12}
=T_{12}(\cT_1')\oplus\cS_2' \oplus\kr T_{12}
\simeq T_{12}(\cT_1')\oplus\kr T_{12}\oplus\cS_2'
\simeq \cT_1\oplus\cS_2'.
\]
Hence also in this case $\cS_2$  contains a complemented copy of $\cT_1$.
\end{proof}

In Corollary \ref{C:EssIncFinDimZs} we concluded that in case $\cV$ and $\cW$ are essentially incomparable and $\cZ_1$ and $\cZ_2$ are finite dimensional, them Problem \ref{P:BSOP} is only solvable in case $\dim(\cZ_1)=\dim(\cZ_2)$. We now consider another case with $\cV$ and $\cW$ essentially incomparable, where $\cZ_1$ and $\cZ_2$ are infinite dimensional.

\begin{lemma}\label{L:VW-EssInc}
Assume $\cV\oplus \cZ_1\simeq \cW\oplus \cZ_1$ or $\cV\oplus \cZ_2\simeq \cW\oplus \cZ_2$ (note that one implies the other via \eqref{BanProps}). Then $\cZ_1\simeq\cZ_2$ and Problem \ref{P:BSOP} is solvable if $\cV$ and $\cW$ are essentially incomparable and one of the following conditions is satisfied
\begin{itemize}
  \item[(i)] $\cV\simeq \cV\oplus\cV$ and either $\cV$ is SUFDQ or both $\cZ_1$ and $\cZ_2$ are SUFDQ;
  \item[(ii)] $\cW\simeq \cW\oplus\cW$ and either $\cW$ is SUFDQ or both $\cZ_1$ and $\cZ_2$ are SUFDQ.
\end{itemize}
\end{lemma}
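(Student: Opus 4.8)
The plan is to reduce everything to proving the isomorphism $\cZ_1\simeq\cZ_2$, since once this is established, Lemma \ref{L:Z1Z2iso} immediately yields the solvability of Problem \ref{P:BSOP}. The whole argument rests on Lemma \ref{L:EssIncImplic}, and since interchanging the roles of $\cV$ and $\cW$ preserves essential incomparability, preserves \eqref{BanProps}, and preserves the standing hypothesis $\cV\oplus\cZ_1\simeq\cW\oplus\cZ_1$, condition (ii) is obtained from condition (i) by this symmetry. So I would only treat case (i), where $\cV\simeq\cV\oplus\cV$.

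First I would extract complemented copies of $\cV$ inside both $\cZ_1$ and $\cZ_2$. Rewriting the hypothesis as $\cZ_1\oplus\cV\simeq\cZ_1\oplus\cW$ and applying Lemma \ref{L:EssIncImplic} with $\cS_1=\cS_2=\cZ_1$, $\cT_1=\cV$, $\cT_2=\cW$ (legitimate because $\cV$ and $\cW$ are essentially incomparable), the final clause of that lemma gives that $\cZ_1$ contains a complemented copy of $\cV$, provided $\cV$ or $\cZ_1$ is SUFDQ. Similarly, starting from the equivalent isomorphism $\cZ_2\oplus\cV\simeq\cZ_2\oplus\cW$ (which holds by \eqref{BanProps}) and applying the same lemma with $\cS_1=\cS_2=\cZ_2$, one gets that $\cZ_2$ contains a complemented copy of $\cV$, provided $\cV$ or $\cZ_2$ is SUFDQ. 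In the sub-case where $\cV$ is SUFDQ both conclusions are immediate; in the sub-case where $\cZ_1$ and $\cZ_2$ are SUFDQ the first conclusion uses that $\cZ_1$ is SUFDQ and the second that $\cZ_2$ is SUFDQ. Either way I obtain $\cZ_1\simeq\cV\oplus\cR_1$ and $\cZ_2\simeq\cV\oplus\cR_2$ for some Banach spaces $\cR_1,\cR_2$.

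Next I would absorb the extra copy of $\cV$ using $\cV\simeq\cV\oplus\cV$. For $i=1,2$ this gives $\cV\oplus\cZ_i\simeq\cV\oplus\cV\oplus\cR_i\simeq\cV\oplus\cR_i\simeq\cZ_i$, so that $\cV\oplus\cZ_i\simeq\cZ_i$. Combining with the isomorphism $\cV\oplus\cZ_1\simeq\cV\oplus\cZ_2$ supplied by \eqref{BanProps}, I conclude $\cZ_1\simeq\cV\oplus\cZ_1\simeq\cV\oplus\cZ_2\simeq\cZ_2$. With $\cZ_1\simeq\cZ_2$ in hand, Lemma \ref{L:Z1Z2iso} finishes the proof by producing an invertible $T$ as in \eqref{T}.

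The substantive step is the first one: producing an honest complemented copy of $\cV$ inside $\cZ_1$ and $\cZ_2$. This is exactly where essential incomparability of $\cV,\cW$ (forcing the off-diagonal blocks of the implementing isomorphism to be left/right Atkinson) combines with the SUFDQ hypothesis to upgrade a \emph{finite-codimensional} subspace of $\cV$ into a genuine complemented copy of $\cV$; this is the content of Lemma \ref{L:EssIncImplic} and is the only place the hypotheses are really consumed. The remaining absorption and chaining of isomorphisms is routine, and the one point requiring care is the bookkeeping of which of the two SUFDQ alternatives in (i) feeds Lemma \ref{L:EssIncImplic} on the $\cZ_1$-side versus the $\cZ_2$-side.
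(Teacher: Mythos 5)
Your proposal is correct and follows essentially the same route as the paper: apply Lemma \ref{L:EssIncImplic} with $\cS_1=\cS_2=\cZ_i$, $\cT_1=\cV$, $\cT_2=\cW$ to embed a complemented copy of $\cV$ in each $\cZ_i$, absorb it via $\cV\simeq\cV\oplus\cV$ to get $\cZ_i\simeq\cV\oplus\cZ_i$, and chain through \eqref{BanProps} to conclude $\cZ_1\simeq\cZ_2$, whence Lemma \ref{L:Z1Z2iso} gives solvability. Your explicit bookkeeping of which SUFDQ alternative feeds the $\cZ_1$- versus $\cZ_2$-side application is a point the paper leaves implicit, but the argument is the same.
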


\begin{proof}[\bf Proof]
Assume $\cV$ and $\cW$ are essentially incomparable and condition (i) holds; the proof for (ii) goes analogously. For $i=1,2$, apply Lemma \ref{L:EssIncImplic} with $\cS_1=\cS_2=\cZ_i$, $\cT_1=\cV$ and $\cT_2=\cW$. We find that $\cZ_i\simeq\cV\oplus\cZ_i'$ for some Banach space $\cZ_i'$. Since also $\cV\simeq \cV\oplus\cV$, we have
\[
\cZ_i\simeq \cV\oplus\cZ_i' \simeq \cV\oplus\cV\oplus\cZ_i' \simeq \cV\oplus\cZ_i.
\]
Using the first isomorphism in \eqref{BanProps} we obtain that $\cZ_1\simeq \cV \oplus\cZ_1\simeq \cV\oplus\cZ_2 \simeq \cZ_2$, which proves our claim.
\end{proof}

Next we consider the case where $\cZ_1$ and $\cZ_2$ are essentially incomparable.

\begin{lemma}\label{L:Z1Z2EssInc}
Assume $\cZ_1$ and $\cZ_2$ are essentially incomparable and  $\cZ_1$ and $\cZ_2$ are SUFDQ. Then Problem \ref{P:BSOP} is solvable in case $\cZ_1\simeq \cZ_1\oplus \cZ_1$ and $\cZ_2\simeq \cZ_2\oplus \cZ_2$.
\end{lemma}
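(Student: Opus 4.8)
The plan is to solve Problem \ref{P:BSOP} by producing, inside $\fK$, a relatively regular operator $F=I_\cV-B_1B_2$ with $\kr F\simeq\cZ_2$ and $\cokr F\simeq\cZ_1$, and then bordering $F$ to an invertible $T$. As motivation for why this is the right target, note that $\cZ_1\not\simeq\cZ_2$ here: a nonzero finite dimensional space cannot satisfy $\cZ_i\simeq\cZ_i\oplus\cZ_i$, so both $\cZ_i$ are infinite dimensional, and two isomorphic infinite dimensional spaces are never essentially incomparable. Hence Lemma \ref{L:Z1Z2iso} does not apply, the top-left corner of $T$ must be genuinely non-invertible, and in particular it cannot be idempotent (an idempotent $F$ would force $\kr F\simeq\cokr F$). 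A quick bordering analysis explains the target: extending the domain of $F$ by $\cZ_1$ and the codomain by $\cZ_2$, one uses the extra $\cZ_1$ to cover $\cokr F$ and the extra $\cZ_2$ to receive $\kr F$, which forces $\cokr F\simeq\cZ_1$ and $\kr F\simeq\cZ_2$.

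The first step is to locate matched copies of $\cZ_1$ and $\cZ_2$. Applying Lemma \ref{L:EssIncImplic} to the first isomorphism in \eqref{BanProps} (with $\cT_1=\cZ_1$, $\cS_2=\cV$) and to the same isomorphism read backwards (with $\cT_1=\cZ_2$), and using that $\cZ_1,\cZ_2$ are SUFDQ, I obtain that $\cV$ contains complemented copies of both $\cZ_1$ and $\cZ_2$; the second isomorphism in \eqref{BanProps} gives the same for $\cW$. Combining with $\cZ_i\simeq\cZ_i\oplus\cZ_i$ yields $\cV\simeq\cZ_i\oplus\cV$ and $\cW\simeq\cZ_i\oplus\cW$, so, substituting,
\[
\cV\simeq\cZ_1\oplus\cZ_2\oplus\cV_0,\qquad \cW\simeq\cZ_1\oplus\cZ_2\oplus\cW_0
\]
for suitable $\cV_0,\cW_0$. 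I fix these decompositions and write $\gamma_i\in\cB(\cZ_i,\cV)$, $p_i\in\cB(\cV,\cZ_i)$ and $a_i\in\cB(\cZ_i,\cW)$, $q_i\in\cB(\cW,\cZ_i)$ for the coordinate inclusions and projections, so that $p_i\gamma_j=\de_{ij}I$ and $q_ia_j=\de_{ij}I$ (Kronecker delta).

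The main step, and the step I expect to be the only delicate one, is realising $F$ inside $\fK$. Using $\cZ_1\simeq\cZ_1\oplus\cZ_1$, choose $s\in\cB(\cZ_1)$ injective with complemented range and $\cokr s\simeq\cZ_1$ (inclusion of $\cZ_1$ as the first summand of $\cZ_1\oplus\cZ_1\simeq\cZ_1$); using $\cZ_2\simeq\cZ_2\oplus\cZ_2$, choose $c\in\cB(\cZ_2)$ surjective with $\kr c\simeq\cZ_2$ complemented (a coordinate projection of $\cZ_2\simeq\cZ_2\oplus\cZ_2$). Now set
\[
B_2=a_1p_1+a_2p_2\in\cB(\cV,\cW),\qquad
B_1=\gamma_1(I-s)q_1+\gamma_2(I-c)q_2\in\cB(\cW,\cV).
\]
Because $q_ia_j=\de_{ij}I$, every cross term vanishes and $B_1B_2=\gamma_1(I-s)p_1+\gamma_2(I-c)p_2$ exactly, so with respect to $\cV\simeq\cZ_1\oplus\cZ_2\oplus\cV_0$,
\[
F=I_\cV-B_1B_2=\mat{ccc}{s&0&0\\0&c&0\\0&0&I_{\cV_0}}.
\]
This $F$ is relatively regular with $\kr F\simeq\cZ_2$ and $\cokr F\simeq\cZ_1$. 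The crucial point is that the \emph{matched} complemented copy of $\cZ_1\oplus\cZ_2$ present in both $\cV$ and $\cW$ forces the off-diagonal contributions, which a priori only factor through $\cW$ as inessential maps between $\cZ_1$ and $\cZ_2$, to vanish identically; this is exactly what lets $F$ have non-isomorphic kernel and cokernel while still lying in $\fK$.

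It then remains to border $F$, which is routine. Writing $\cV=\kr F\oplus\cV_1$ and $\cV=\ran F\oplus\cC$ with $\cC\simeq\cZ_1$ and $F|_{\cV_1}\colon\cV_1\to\ran F$ an isomorphism, I take $A_{12}\in\cB(\cZ_1,\cV)$ an isomorphism onto $\cC$, $A_{21}\in\cB(\cV,\cZ_2)$ vanishing on $\cV_1$ and mapping $\kr F$ isomorphically onto $\cZ_2$, and $A_{22}=0$. A direct injectivity/surjectivity check shows the resulting $T$ in \eqref{T} is invertible with entries as in \eqref{Ops}, so Problem \ref{P:BSOP} is solvable. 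Once the matched $\cZ_1\oplus\cZ_2$ decompositions are in hand, the reductions and this final bordering are straightforward; the heart of the argument is the third paragraph.
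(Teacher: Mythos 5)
Your proposal is correct and follows essentially the same route as the paper: both arguments use Lemma \ref{L:EssIncImplic} together with $\cZ_i\simeq\cZ_i\oplus\cZ_i$ to produce \emph{matched} complemented copies of $\cZ_1\oplus\cZ_2$ inside $\cV$ and $\cW$, and then exploit the copy in $\cW$ to factor the required perturbation $B_1B_2$ so that the $(1,1)$-corner of $T$ lies in $\fK$. The only differences are presentational: you obtain the decomposition $\cV\simeq\cZ_1\oplus\cZ_2\oplus\cV_0$ by the absorption $\cV\simeq\cZ_i\oplus\cV$, where the paper instead applies Lemma \ref{L:EssIncImplic} once more to $\cV_1\oplus\cZ_1\simeq\cV_2\oplus\cZ_2$, and you border an explicitly constructed relatively regular corner $\diag(s,c,I_{\cV_0})$ with $\kr F\simeq\cZ_2$ and $\cokr F\simeq\cZ_1$, where the paper takes the $(1,1)$-corner of an arbitrary invertible $S\colon(\cZ_1\oplus\cZ_2)\oplus\cZ_1\to(\cZ_1\oplus\cZ_2)\oplus\cZ_2$.
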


\begin{proof}[\bf Proof]
Applying Lemma \ref{L:EssIncImplic} to \eqref{BanProps} it follows that $\cV$ contains complemented copies of $\cZ_1$ and $\cZ_2$ and likewise $\cW$ contains complemented copies of $\cZ_1$ and $\cZ_2$. Hence for $i=1,2$ we have $\cV\simeq\cV_i\oplus\cZ_i$ and $\cW\simeq\cW_i\oplus \cZ_i$ for Banach spaces $\cV_i$ and $\cW_i$. In particular, we find that
\[
\cV_1\oplus\cZ_1\simeq \cV_2\oplus\cZ_2.
\]
Again applying Lemma \ref{L:EssIncImplic} it follows that $\cV_1$ contains a complemented copy of $\cZ_2$ and $\cV_2$ contains a complemented copy of $\cZ_2$. In both cases we obtain that $\cV\simeq\wtil{\cV}\oplus\cZ_1\oplus\cZ_2$ for a Banach space $\wtil{\cV}$ and by similar arguments
$\cW\simeq\wtil{\cW}\oplus\cZ_1\oplus\cZ_2$ for a Banach space $\wtil{\cW}$. By assumption we have $\cZ_1\simeq \cZ_1\oplus \cZ_1$ and $\cZ_2\simeq \cZ_2\oplus \cZ_2$. Thus
\[
\cZ_1\oplus\cZ_2\oplus \cZ_1 \simeq \cZ_1\oplus \cZ_1\oplus\cZ_2 \simeq \cZ_1\oplus\cZ_2
\simeq \cZ_1\oplus\cZ_2 \oplus\cZ_2.
\]
Now let
\[
S=\mat{cc}{S_{11}& S_{12}\\ S_{21} & S_{22}}:\mat{c}{\cZ_1\oplus\cZ_2\\ \cZ_1}\to \mat{c}{\cZ_1\oplus\cZ_2\\ \cZ_2}
\]
be invertible. Subject to the identifications $\cV\simeq\wtil{\cV}\oplus\cZ_1\oplus\cZ_2$ and
$\cW\simeq\wtil{\cW}\oplus\cZ_1\oplus\cZ_2$ we obtain an invertible operator
\[
T=\mat{ccc}{I_{\wtil{\cV}}&0&0\\ 0&S_{11}& S_{12}\\ 0&S_{21}&S_{22}}
:\mat{c}{\wtil{\cV}\\ \cZ_1\oplus\cZ_2\\\cZ_1}\to\mat{c}{\wtil{\cV}\\ \cZ_1\oplus\cZ_2\\\cZ_2}
\]
which is as in \eqref{T} via
\begin{align*}
&B_1=\mat{cc}{0&0\\0& I_{\cZ_1\oplus\cZ_2}}:\mat{c}{\wtil{\cW}\\ \cZ_1\oplus\cZ_2} \to \mat{c}{\wtil{\cV}\\ \cZ_1\oplus\cZ_2},\\
&B_2=\mat{cc}{0&0\\0& I_{\cZ_1\oplus\cZ_2}-S_{11}}:\mat{c}{\wtil{\cV}\\ \cZ_1\oplus\cZ_2} \to \mat{c}{\wtil{\cW}\\ \cZ_1\oplus\cZ_2},\\
& A_{12}=\mat{c}{0\\S_{12}}:\cZ_1\to\mat{c}{\wtil{\cV}\\\cZ_1\oplus\cZ_2},\
A_{21}=\mat{cc}{0&S_{21}}:\mat{c}{\wtil{\cV}\\\cZ_1\oplus\cZ_2}\to\cZ_2,\\
&A_{22}=S_{22}.\qedhere
\end{align*}
\end{proof}

\begin{remark}
In Lemmas \ref{L:VW-EssInc} and \ref{L:Z1Z2EssInc} we encountered Banach spaces $\cZ$ with the property $\cZ\simeq\cZ\oplus \cZ$. This property also appears in the Pe{\l}czy\'{n}ski decomposition technique for the Banach space Schroeder-Bernstein problem, cf., Theorem 2.2.3 in \cite{AK16}. All separable Banach spaces with countable primary unconditional bases have this property \cite{K99}, see also \cite{FR05} for further examples. Prime spaces that have infinite dimensional complemented subspaces with infinite codimension are isomorphic to their squares. However, there are prime spaces whose infinite dimensional complemented subspaces are all finite codimensional \cite{M03}, so these prime spaces cannot be isomorphic to their squares. Other examples are a reflexive Banach space not isomorphic to its square \cite{F72} and a Banach space that is isomorphic to its cube, but not to its square \cite{G96}.
\end{remark}

\section{New cases where EAE and SC coincide, or not}\label{S:EAEvsSC}

In this section we translate some of the results from the analysis of Problem \ref{P:BSOP} to the question in which cases EAE implies SC for relatively regular operators. Let $U\in\cB(\cX)$ and $V\in\cB(\cY)$ be relatively regular operators as in \eqref{UVdec}. For the reader's convenience we recall the setting and translation to Problem \ref{P:BSOP} here. Hence $U$ and $V$ decompose as
\[
U=\mat{cc}{U'&0\\ 0&0}:\mat{c}{\cX_1\\ \cX_2}\to \mat{c}{\cX_1'\\ \cX_2'} \ands
V=\mat{cc}{V'&0\\ 0&0}:\mat{c}{\cY_1\\ \cY_2}\to \mat{c}{\cY_1'\\ \cY_2'},\\
\]
where $U'$ and $V'$ are invertible and
\[
\cX_2=\kr U,\quad \cX_1'=\ran U,\quad
\cY_2=\kr V,\quad \cY_1'=\ran V.
\]
Although not uniquely determined, we will use the notation
\[
\cX_2'=\cokr U \ands \cY_2'=\cokr V.
\]
Since $U$ and $V$ are relatively regular, we have
\[
\cX_1\simeq \cX_1' \ands \cY_1\simeq \cY_1'.
\]
Throughout this section we will assume $U$ and $V$ are EAE, which yields
\[
\cX_2\simeq\cY_2 \ands \cX_2'\simeq\cY_2'.
\]
Hence in that case, up to isomorphy we only have the spaces
\[
\cV:=\ran U,\quad
\cW:=\ran V,\quad
\cZ_1:=\kr U\simeq\kr V,\quad
\cZ_2:=\cokr U \simeq\cokr V,
\]
and this also provides the translation to Problem \ref{P:BSOP}, noting that we may interchange the roles of $U$ and $V$, and hence $\cV$ and $\cW$.

Using the above we translate some results of Section \ref{S:AnalyseBSOP} to the current setting.

\begin{proposition}\label{P:Implic}
Let $U\in\cB(\cX)$ and $V\in\cB(\cY)$ be relatively regular operators as in \eqref{UVdec} which are EAE. Then $U$ and $V$ are SC in the following cases:
\begin{itemize}
\item[(i)] $\kr U\simeq \cokr U$ (Lemma \ref{L:Z1Z2iso});

\item[(ii)] $U$ or $V$ finite rank (Corollary \ref{C:VorWfindim});

\item[(iii)] $\ran U$ isomorphic to a complemented subspace of $\ran V$ or $\ran V$ isomorphic to a complemented subspace of $\ran U$ (Corollary \ref{C:IsoTocompSubs});

\item[(iv)] $\cX\simeq\cY$, $U$ and $V$ Atkinson operators (Corollary \ref{C:V=W,Z1Z2findim});

\item[(v)] $\kr U$ and $\cokr U$ finite dimensional, $\ran U$ has a finite codimensional subspace which is isomorphic to a complemented subspace of $\ran V$ or $\ran V$ has a finite codimensional subspace which is isomorphic to a complemented subspace of $\ran U$ (Proposition \ref{P:BSOPfindimiso});

\item[(vi)] $\ran U$ and $\ran V$ essentially incomparable, $\kr U$ and $\cokr U$ finite dimensional with the same dimension (Corollary \ref{C:EssIncFinDimZs});

\item[(vii)] $\ran U$ and $\ran V$ essentially incomparable, $\ran U\simeq \ran U\oplus\ran U$ and either $\ran U$ is SUFDQ or both $\kr U$ and $\cokr U$ are SUFDQ or $\ran V\simeq \ran V\oplus\ran V$ and either $\ran V$ is SUFDQ or both $\kr V$ and $\cokr V$ are SUFDQ (Lemma \ref{L:VW-EssInc});

\item[(viii)] $\kr U$ and $\cokr U$ essentially incomparable, $\kr U$ and $\cokr U$ SUFDQ, $\kr U\simeq \kr U \oplus \kr U$ and $\cokr U\simeq \cokr U \oplus \cokr U$ (Lemma \ref{L:Z1Z2EssInc}).

\end{itemize}
\end{proposition}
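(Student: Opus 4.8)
The plan is to treat this as a pure dictionary argument, since all the hard work has already been done at the level of Problem \ref{P:BSOP}. The key tool is the Proposition preceding Corollary \ref{C:EAEnotSC}: for relatively regular EAE operators $U$ and $V$, the spaces $\cV=\ran U$, $\cW=\ran V$, $\cZ_1=\kr U\simeq\kr V$, $\cZ_2=\cokr U\simeq\cokr V$ satisfy \eqref{BanProps}, and $U$ and $V$ are SEAE if and only if the associated Problem \ref{P:BSOP} is solvable. Combined with the equivalence of SEAE and SC (Theorem 2 in \cite{BT92b}), this reduces the whole proposition to verifying, case by case, that the stated hypotheses translate into those of the cited result, and then invoking that result to obtain solvability of Problem \ref{P:BSOP}. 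I would also record at the outset that the translation permits interchanging the roles of $U$ and $V$, hence of $\cV$ and $\cW$, which is needed for the ``or'' alternatives in (iii) and (vii).

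First I would dispatch the easy cases. For (i), $\kr U\simeq\cokr U$ is exactly $\cZ_1\simeq\cZ_2$, so Lemma \ref{L:Z1Z2iso} applies. For (ii), $U$ or $V$ finite rank means $\dim(\cV)<\infty$ or $\dim(\cW)<\infty$, which is Corollary \ref{C:VorWfindim}. For (iii), ``$\ran U$ isomorphic to a complemented subspace of $\ran V$'' is the hypothesis of Corollary \ref{C:IsoTocompSubs}, and the reverse alternative follows by the $U\leftrightarrow V$ symmetry. For (iv), I would note that relative regularity gives $\cX\simeq\ran U\oplus\kr U=\cV\oplus\cZ_1$ and, using $\kr U\simeq\kr V$, also $\cY\simeq\ran V\oplus\kr V\simeq\cW\oplus\cZ_1$, so the hypothesis $\cX\simeq\cY$ is precisely $\cV\oplus\cZ_1\simeq\cW\oplus\cZ_1$; the Atkinson hypothesis makes $\kr U=\cZ_1$ or $\cokr U=\cZ_2$ finite dimensional, and Corollary \ref{C:V=W,Z1Z2findim} then applies.

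The remaining cases are equally mechanical translations. For (v), finite dimensionality of $\kr U$ and $\cokr U$ gives $\cZ_1,\cZ_2$ finite dimensional, and the range conditions are verbatim those of Proposition \ref{P:BSOPfindimiso}. For (vi), essential incomparability of $\ran U,\ran V$ together with $\dim(\kr U)=\dim(\cokr U)<\infty$ feeds directly into Corollary \ref{C:EssIncFinDimZs}. Cases (vii) and (viii) are the translations of Lemma \ref{L:VW-EssInc} and Lemma \ref{L:Z1Z2EssInc}, once one substitutes $\cV=\ran U$, $\cW=\ran V$, $\cZ_1=\kr U$, $\cZ_2=\cokr U$ and, in (vii), uses the $U\leftrightarrow V$ symmetry to cover both clauses. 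In every case the cited result yields solvability of Problem \ref{P:BSOP}, hence that $U$ and $V$ are SEAE, equivalently SC.

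I do not expect a genuine mathematical obstacle, as the substantive content lives in Section \ref{S:AnalyseBSOP}; the only care required is bookkeeping. Specifically, I would be careful to read ``finite rank'' in (ii) as finite-dimensional \emph{range} rather than finite-dimensional kernel, to verify cleanly that $\cX\simeq\ran U\oplus\kr U$ so that (iv) matches Corollary \ref{C:V=W,Z1Z2findim}, and to invoke the $U\leftrightarrow V$ symmetry precisely in the cases where two alternatives are stated. These are the only places where a routine translation could be misapplied.
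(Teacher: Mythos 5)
Your proposal is correct and is essentially the paper's own argument: the paper gives no separate proof of Proposition \ref{P:Implic}, but derives it exactly as you do, via the dictionary $\cV=\ran U$, $\cW=\ran V$, $\cZ_1=\kr U\simeq\kr V$, $\cZ_2=\cokr U\simeq\cokr V$ (with the noted $U\leftrightarrow V$ symmetry) and the equivalence of SC with SEAE and hence with solvability of Problem \ref{P:BSOP}. Your bookkeeping on each case, including reading (iv) as $\cV\oplus\cZ_1\simeq\cW\oplus\cZ_1$ with $\cZ_1$ or $\cZ_2$ finite dimensional, matches the intended citations.
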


We point out here that case (i) was obtained in \cite[Theorem 3.3]{tHR13}, while case (iv) can easily be derived from the main result in \cite{tHMRR19} (noting that with $\kr U$ and $\cokr U$ finite dimensional we have that $\cX$ and $\cY$ are essentially incomparable if and only if $\ran U$ and $\ran V$ are essentially incomparable). Case (iv) is an improvement of Proposition 5.1 (1) where in addition to $U$ and $V$ Atkinson it was also asked that $\cX$ and $\cY$ are SUFDQ. All other cases discussed in Proposition \ref{P:Implic} do not seem to be covered in the literature, to the best of our knowledge.

Corollary \ref{C:EssIncFinDimZs} also provides a way to construct an example that shows EAE does not imply SC, with $\cX$ and $\cY$ essentially incomparable and $\kr U$ and $\cokr U$ finite dimensional, but this, and much more, was already covered in \cite{tHMRR19}. The main result of \cite{tHMRR19}, as well as \cite[Proposition 2.4]{tHMRR19}, may lead one to hope that for relatively regular operators the case with $\cX$ and $\cY$ essentially incomparable is the only case where EAE and SC do not coincide. Although maybe somewhat contrived, the following example shows this is not the case.

\begin{example}
Following the construction preceding Corollary \ref{C:EAEnotSC} we can use Example \ref{E:EAEnotSC} to obtain a new example which shows EAE need not imply SC. In Example \ref{E:EAEnotSC} we take
\[
\cV=\cV_0\oplus \ell^p \ands \cW=\cV_0\oplus \ell^q
\]
with $\cV_0$ a Banach space with few operators and $p\neq q$. Since $\ell^q$ and $\ell^p$ are SUFDQ, so are $\cV$ and $\cW$, by Lemma \ref{L:BSPsum}, hence $\cV\oplus\cZ\sim \cV$ and $\cW\oplus\cZ\sim \cW$ for any finite dimensional Banach space $\cZ$, hence we may take $\cX=\cW$ and $\cY=\cV$. We then set
\[
U=\mat{cc}{I_{\cV_0}&0\\0&S_q} \ands V=\mat{cc}{I_{\cV_0}&0\\0&S_p},
\]
with $S_q$ and $S_p$ the forward shifts on $\ell^q$ and $\ell^p$, respectively. Translating back to Problem \ref{P:BSOP} we obtain $\cZ_1 =\{0\}$ and $\cZ_2\simeq \BC$, so that Problem \ref{P:BSOP} is not solvable by the analysis in Example \ref{E:EAEnotSC}, hence $U$ and $V$ are not SC. On the other hand, $U$ and $V$ are Fredholm and the dimensions of their kernels and cokernels coincide, so that $U$ and $V$ are EAE by Proposition 1 in \cite{BT92a}.
\end{example}

\paragraph{\bf Declarations:} $ $\smallskip

\paragraph{\bf Conflict of interest} The authors declare that they have no conflict of interest.\smallskip

\paragraph{\bf Funding}
This work is based on research supported in part by the National Research Foundation of South Africa (NRF) and the DSI-NRF Centre of Excellence in Mathematical and Statistical Sciences (CoE-MaSS). Any opinion, finding and conclusion or recommendation expressed in this material is that of the authors and the NRF and CoE-MaSS do not accept any liability in this regard.\smallskip

\paragraph{\bf Availability of data and material} Data sharing is not applicable to this article as no datasets were generated or analysed during the current study.\smallskip

\paragraph{\bf Code availability} Not applicable.


\end{document}